\newcolumntype{L}{>{$}l<{$}} 
\newcolumntype{C}{>{$}c<{$}} 
\newtheorem{theorem}{Theorem}[section]
\theoremstyle{plain}
\newtheorem{corollary}[theorem]{Corollary}
\newtheorem{lemma}[theorem]{Lemma}
\newtheorem{proposition}[theorem]{Proposition}
\theoremstyle{definition}
\newtheorem{definition}[theorem]{Definition}
\newtheorem{example}[theorem]{Example}
\numberwithin{equation}{section}
\def\quotient#1#2{%
    \raise1ex\hbox{$#1$}\Big/\lower1ex\hbox{$#2$}%
}
\DeclareMathOperator{\sgn}{sgn} 
\DeclareMathOperator{\Disc}{Disc} 
\DeclareMathOperator{\id}{id}
\newcommand{\uv}[1]{``{#1}"}
\newcommand{\R}{\mathbb{R}}
\newcommand{\Z}{\mathbb{Z}}
\newcommand{\Q}{\mathbb{Q}}
\newcommand{\abs}[1]{\left|{#1}\right|} 
\newcommand{\ii}{\mathrm{i}}
\newcommand{\OK}[1]{\ensuremath{\mathcal{O}_{#1}}} 
\newcommand{\ve}{\varepsilon}
\newcommand{\cjg}[1]{\overline{#1}} 
\newcommand{\chv}[1]{ \sgn(\sigma_1({#1})), \dots,  \sgn(\sigma_r({#1}))} 
\newcommand{\barsgn}[1]{\underline{\sgn}\left(#1\right)} 
\newcommand{\norm}[2]{\mathcal{N}_{#1}\left(#2\right)} 
\newcommand{\OrelISet}[1]{\mathcal{I}^{o}_{#1}} 
\newcommand{\OrelPSet}[1]{\mathcal{P}^{o}_{#1}} 
\newcommand{\OrelCl}[1]{\mathcal{C\!\ell}^{o}_{#1}} 
\newcommand{\ISet}[1]{\mathcal{I}_{#1}} 
\newcommand{\PSet}[1]{\mathcal{P}_{#1}} 
\newcommand{\Cl}[1]{\mathcal{C\!\ell}_{#1}} 
\newcommand{\USet}[1]{\mathcal{U}_{#1}} 
\newcommand{\UPlusSet}[1]{\mathcal{U}^{+}_{#1}} 
\newcommand{\OI}[2]{\left(#1; #2_1, \dots, #2_r \right)} 
\newcommand{\OrelI}[2]{\left(#1; #2_1, \dots, #2_r \right)} 
\newcommand{\OrelIbasis}[2]{\left( \left[#1\right]; \barsgn{\det #2} \right)} 
\newcommand{\OrelP}[1]{\left( \left(#1\right); \barsgn{ #1\cjg{#1} } \right) } 
\newcommand{\OrelPnorm}[2]{\left( \left(#1\right); \barsgn{ \norm{#2}{#1} } \right) } 
\newcommand{\DSet}{\mathcal{D}} 
\newcommand{\QFSet}{\mathcal{Q}_\DSet}
\newcommand{\CSet}{\mathcal{C}_\DSet} 
\newcommand{\Bal}{\mathcal{B}al\big(\OrelCl{L/K}\big)} 
\newcommand{\TripI}{\mathcal{T}rip\big(\OrelCl{L/K}\big)} 
\newcommand{\TripQF}{\mathcal{T}rip\left(\QFSet\right)}
\newcommand{\GL}{\mathrm{GL}_2^+(\OK{K})}
\newcommand{\pqrs}{\begin{psmallmatrix}p&q\\r&s \end{psmallmatrix}}
\newcommand{\prqs}{\begin{psmallmatrix}p&r\\q&s \end{psmallmatrix}}
\newcommand{\Mat}{\mathrm{M}}
\newcommand{\ClImag}[1]{\mathcal{C\!\ell}^{i}_{#1}} 
\newcommand{\Gr}[1]{\mathcal{G}_{#1}}
\newcommand{\cubeid}[1]{ 
\tikzset{every node/.style={fill=white}}
\begin{tikzpicture}[scale=1.1, baseline=(current bounding box.center)]
\draw [white] (-2.4, 0) rectangle (5.2, 2);

\node (a) at (0,2) {$0$};
\node (b) at (2,2) {$1$};
\node (c) at (0,0) {$1$};
\node (d) at (2,0) {\scriptsize{$\Omega+\cjg{\Omega}$}};
\node (e) at (0.8,2.8) {$1$};
\node (f) at (2.8,2.8) {\scriptsize{$\Omega+\cjg{\Omega}$}};
\node (g) at (0.8,0.8) {\scriptsize{$\Omega+\cjg{\Omega}$}};
\node (h) at (2.8,0.8) {};

\draw (d) -- (c);
\draw (c) -- (a);
\draw (a) -- (e);
\draw (b) -- (f);
\draw (c) -- (g);
\draw (d) -- (h);
\draw (e) -- (f);
\draw (f) -- (h);
\draw (h) -- (g);
\draw (g) -- (e);
\draw[line width=10pt,white] (a) -- (b);
\draw (a) -- (b);
\draw[line width=10pt, white] (b) -- (d);
\draw (b) -- (d);

\node (hh) at (3.6,0.8) {\scriptsize{$\left(\Omega+\cjg{\Omega}\right)^2-\Omega\cjg{\Omega}$}};

\node (p) at (5, 0.8) {#1}; 
\end{tikzpicture}
}
\newcommand{\cube}[9]{ 
\begin{tikzpicture}[scale=1.1, baseline=(current bounding box.center)]

\node (a) at (0,2) {$#1$};
\node (b) at (2,2) {$#2$};
\node (c) at (0,0) {$#3$};
\node (d) at (2,0) {$#4$};
\node (e) at (0.8,2.8) {$#5$};
\node (f) at (2.8,2.8) {$#6$};
\node (g) at (0.8,0.8) {$#7$};
\node (h) at (2.8,0.8) {$#8$};

\draw (d) -- (c);
\draw (c) -- (a);
\draw (a) -- (e);
\draw (b) -- (f);
\draw (c) -- (g);
\draw (d) -- (h);
\draw (e) -- (f);
\draw (f) -- (h);
\draw (h) -- (g);
\draw (g) -- (e);
\draw[line width=10pt,white] (a) -- (b);
\draw (a) -- (b);
\draw[line width=10pt, white] (b) -- (d);
\draw (b) -- (d);

\node (p) at (3.2, 0.7) {#9}; 
\end{tikzpicture}}
\newcommand{\cubearb}[2][1]{ 
\begin{tikzpicture}[scale=#1, baseline=(current bounding box.center)]

\node (a) at (0,2) {$a$};
\node (b) at (2,2) {$b$};
\node (c) at (0,0) {$c$};
\node (d) at (2,0) {$d$};
\node (e) at (0.8,2.8) {$e$};
\node (f) at (2.8,2.8) {$f$};
\node (g) at (0.8,0.8) {$g$};
\node (h) at (2.8,0.8) {$h$};

\draw (d) -- (c);
\draw (c) -- (a);
\draw (a) -- (e);
\draw (b) -- (f);
\draw (c) -- (g);
\draw (d) -- (h);
\draw (e) -- (f);
\draw (f) -- (h);
\draw (h) -- (g);
\draw (g) -- (e);
\draw[line width=10pt,white] (a) -- (b);
\draw (a) -- (b);
\draw[line width=10pt, white] (b) -- (d);
\draw (b) -- (d);

\node (p) at (3.2, 0.7) {#2}; 
\end{tikzpicture}}
\newcommand{\cubered}[2][1]{ 
\begin{tikzpicture}[scale=#1, baseline=(current bounding box.center)]

\node (a) at (0,2) {$1$};
\node (b) at (2,2) {$0$};
\node (c) at (0,0) {$0$};
\node (d) at (2,0) {$d$};
\node (e) at (0.8,2.8) {$0$};
\node (f) at (2.8,2.8) {$f$};
\node (g) at (0.8,0.8) {$g$};
\node (h) at (2.8,0.8) {$h$};

\draw (d) -- (c);
\draw (c) -- (a);
\draw (a) -- (e);
\draw (b) -- (f);
\draw (c) -- (g);
\draw (d) -- (h);
\draw (e) -- (f);
\draw (f) -- (h);
\draw (h) -- (g);
\draw (g) -- (e);
\draw[line width=10pt,white] (a) -- (b);
\draw (a) -- (b);
\draw[line width=10pt, white] (b) -- (d);
\draw (b) -- (d);

\node (p) at (3.2, 0.7) {#2}; 
\end{tikzpicture}}
\begin{document}
\title[Composition of Bhargava's Cubes]{Composition of Bhargava's Cubes over Number Fields}
\author{Krist\'{y}na Zemkov\'{a}}
\address{Charles University, Faculty of Mathematics and Physics, Department of Algebra,
Sokolovsk\'{a}~83, 18600 Praha~8, Czech Republic}
\address{Department of Mathematical and Statistical Sciences, University of Alberta, Edmonton T6G~2G1, Canada
}
\email{zemk.kr@gmail.com}%
\date{\today}
\subjclass[2010]{11E16; 11E04, 11R04}
\keywords{Bhargava's cube, binary quadratic form, class group}%
\thanks{The author was supported by the Czech Science Foundation GA\v{C}R, grant 21-00420M}

\begin{abstract}
In this paper, the composition of Bhargava's cubes is generalized to the ring of integers of a number field of narrow class number one, excluding the case of totally imaginary number fields.
\end{abstract}
\maketitle

\section{Introduction}
The composition of quadratic forms dates back to the 19th century: Gauss, Dirichlet and Dedekind were concerned with binary quadratic forms with coefficients in $\Z$. In the 20th century, different approaches taken in \cite{Butts-Dulin, Butts-Estes, Kaplansky, Kneser, Towber} led to generalizations of the theory to many other rings. Yet a completely new description of the composition of binary quadratic forms over $\Z$ was given by Bhargava in \cite{BhargavaLawsI}: His novel idea is in using \uv{cubes}, i.e., $2\times2\times2$ boxes of integers. A triple of binary quadratic forms can be attached to each cube, and, on the other hand, each cube corresponds to a certain triple of ideals. Via this correspondence, Bhargava obtained not only the Gauss composition of binary quadratic forms, but also a composition law on the cubes themselves. His work was later followed by \cite{Wood} and \cite{ODorney2016}; in particular, O'Dorney \cite{ODorney2016} generalized the composition of cubes to any Dedekind domain.

In this paper, we try to find a balance between \emph{explicit} and \emph{general}; in particular, we look for such a family of rings, over which cubes themselves carry enough information similarly as in \cite{BhargavaLawsI} (which is not the case in \cite{ODorney2016} and \cite{Wood}). This goal is obtained through the results of \cite{KZforms} where a Dedekind-like correspondence was established: For a number field $K$ of narrow class number with at least one real embedding, there is a bijection between classes of binary quadratic forms over $\OK{K}$ (i.e., over the ring of algebraic integers of $K$), and classes of oriented ideals over a quadratic extension of $K$; see Theorem \ref{Theorem:Bijection}. We take advantage of the explicit description of the correspondence: First, in Theorem \ref{Theorem:QFCompFromCube}, we show that the three quadratic forms arising from one cube compose to the identity quadratic form. Afterwards, we use both of the above-mentioned theorems for the proof of Theorem \ref{Theorem:BijectionCubes}, where we extend Bhargava's result to a correspondence between cubes over $\OK{K}$ and certain triples of ideals. 

Although the final result is a straightforward generalization of Bhargava's result, there are (and must be) some significant differences on the path: First, we use a slightly different method to prove that the three quadratic forms arising from one cube compose to identity (see the first paragraph of Section~\ref{Sec:QFcomposition}). But more importantly, we avoid a horrendous computation by \uv{going around} and applying previous results of the author (see the first paragraph of Subsection~\ref{Subsec:CtoI} and Figure~\ref{Fig:DetailedScheme}). 

In Appendix~\ref{Sec:App}, we comment on the case of totally imaginary number fields which we exclude in this paper and point out a mistake in~\cite{KZforms}.

This paper is based on the second part of the author's Master thesis~\cite{KZthesis}.

\section{Preliminaries}
In this section, we summarize the definitions and results from the paper~\cite{KZforms}.

Throughout the whole article, we fix a number field $K$ of narrow class number one; that is, of class number one and with units of all signs (see \cite[Ch. V, (1.12)]{FrohlichTaylor}). Moreover, we assume that $K$ has exactly $r$ embeddings into $\R$ with $r>0$ (see Appendix~\ref{Sec:App} for an explanation why we need this assumption); we denote these embeddings $\sigma_1, \dots, \sigma_r$. By $\OK{K}$, $\USet{K}$, $\UPlusSet{K}$, respectively, we denote the ring of algebraic integers in $K$, the group of units of this ring (i.e., $u\in\OK{K}$ with $\abs{\norm{K/\Q}{u}}=1$), and the subgroup of totally positive units (i.e., $u\in\USet{K}$ such that $\sigma_i(u)>0$ for all $1\leq i\leq r$).

Furthermore, let us fix a number field $L$ such that $L$ is a quadratic extension of $K$. The Galois group $\mathrm{Gal}(L/K)$ has two elements; we write $\cjg{\alpha}$ for the image of $\alpha\in L$ under the action of the nontrivial element of this Galois group.

\subsection{The ring $\OK{L}$}
It is known that, as $K$ is of narrow class number one, the ring $\OK{L}$ is a free $\OK{K}$-module and $\OK{L}=[1, \Omega]_{\OK{K}}$ for a suitable $\Omega\in\OK{L}$ (see \cite[Cor. p. 388]{narkiewicz2004elementary} and \cite[Prop. 2.24]{MilneANT}); for simplification, we will omit the index and write just $\OK{L}=[1, \Omega]$. Let $x^2+wx+z$ be the minimal polynomial of $\Omega$ over $\OK{K}$ and set $D_\Omega=w^2-4z$; then, without loss of generality,
\begin{equation}\label{Omega}
\Omega=\frac{-w+\sqrt{D_{\Omega}}}{2}, \ \ \ \ \ \ \cjg{\Omega}=\frac{-w-\sqrt{D_{\Omega}}}{2},
\end{equation} 
and consequently $D_{\Omega}=\left(\Omega-\cjg{\Omega}\right)^2$.   

As the discriminant of a quadratic form, the element $D_\Omega$ may not necessarily be square-free, but it is \uv{almost square-free}; in fact, it is fundamental (see \cite[Lemma~2.2]{KZforms}):

\begin{definition}\label{Def:AlmostSquare-free}
An element $d$ of $\OK{K}$ is called \emph{fundamental} if $d$ is a quadratic residue modulo $4$ in $\OK{K}$, and
\begin{itemize}
	\item either $d$ is square-free,
	\item or for every $p\in\OK{K}\backslash\:\USet{K}$ such that $p^2\mid d$ the following holds: $p\mid 2$ and $\frac{d}{p^2}$ is not a quadratic residue modulo $4$ in $\OK{K}$.
\end{itemize}
\end{definition}

Obviously, $L=K\left(\sqrt{D_{\Omega}}\right)$; if $d\in\OK{K}$ is another fundamental element satisfying $L=K(\sqrt{d})$, then $d=u^2D_{\Omega}$ for some $u\in\USet{K}$ (see \cite[Lemma~2.3]{KZforms}).

\subsection{Quadratic forms}\label{Subsec:QFs}
Under a \emph{quadratic form}, we understand a homogeneous polynomial of degree 2 with coefficients in $\OK{K}$, i.e., $Q(x, y)=ax^2+bxy+cy^2$ with $a, b, c \in \OK{K}$. By $\Disc(Q)$ we denote the discriminant of the quadratic form $Q$, i.e., $\Disc(Q)=b^2-4ac$.  A quadratic form $Q(x,y)=ax^2+bxy+cy^2$ is called \emph{primitive} if $\gcd(a,b,c)\in\USet{K}$. We say that \emph{two quadratic forms $Q(x,y)$ and $\widetilde{Q}(x,y)$ are equivalent} if there exist $p, q, r, s\in\OK{K}$ satisfying $ps-qr \in \UPlusSet{K}$ and a totally positive unit $u \in \UPlusSet{K}$ such that $\widetilde{Q}(x,y)=u Q(px+qy, rx+sy)$; we denote this by $Q\sim\widetilde{Q}$. Under this definition, the discriminants of equivalent quadratic forms may differ by a square of a totally positive unit; therefore, we will consider all primitive quadratic forms with discriminants in the set 
\[\DSet=\left\{u^2\left(\Omega-\cjg{\Omega}\right)^2 ~\big|~ u \in \UPlusSet{K}\right\}\]
modulo the equivalence relation defined above:
\[\QFSet=\quotient{\left\{Q(x,y)=ax^2+bxy+cy^2 ~\big|~ a,b,c\in\OK{K},\ \gcd(a,b,c)\in\USet{K}, \ \Disc(Q)\in\DSet \right\}}{\sim}.\] 

\subsection{Ideals and orientation} \label{Subsec:IdealsAndOrientation}
Not only the whole ring $\OK{L}$, but also all the (fractional) $\OK{L}$-ideals have an $\OK{K}$-module basis of the form $[\alpha, \beta]$ for some $\alpha, \beta\in L$. In the following, the word ``ideal'' will generally stand for a fractional ideal while  we will refer to the usual meaning as to the ``integral ideal''. 

If $I=[\alpha,\beta]$ is an ideal in $\OK{L}$, then there exists a $2\times2$ matrix $M$ consisting of elements of $K$ such that
\begin{equation}\label{Eq:MatrixM}
\begin{pmatrix}
		\cjg{\alpha} & \alpha \\
		\cjg{\beta} & \beta \\ 
	\end{pmatrix}
	= M\cdot
	\begin{pmatrix}
		1 & 1 \\
		\cjg{\Omega} & \Omega \\
	\end{pmatrix}.
\end{equation}
Then 
\[\det M = \frac{\cjg{\alpha}\beta-\alpha\cjg{\beta}}{\Omega-\cjg{\Omega}}.\]
It is easy to check that $\det M \in K$, and if $\alpha, \beta \in \OK{L}$, then $\det M \in \OK{K}$. Furthermore, if $[p\alpha+r\beta, q\alpha+s\beta]$ is another $\OK{K}$-module basis of $I$ and $\widetilde{M}$ is the matrix corresponding to this basis, then $\det \widetilde{M}=(ps-qr)\det M$.

For $\alpha\in L$, we define the \emph{relative norm} as $\norm{L/K}{\alpha}=\alpha\overline{\alpha}$. Moreover, if $I$ is an $\OK{L}$-ideal, then the \emph{relative norm of the ideal $I$} is the $\OK{K}$-ideal $\norm{L/K}{I}=\left(\norm{L/K}{\alpha}: \alpha\in I \right)$. Note that this ideal has to be principal (because $\OK{K}$ is a principal ideal domain); it is generated by $\det M$ (see \cite[Th. 1]{Mann}).\footnote{This implies that the definition of the relative norm of an ideal agrees with the one used in \cite{BhargavaLawsI}. It is no longer true if we consider other rings than $\OK{K}$;  in such a case, \cite[Th. 1]{Mann} is not applicable. } Obviously $\norm{L/K}{(\alpha)}=\left(\norm{L/K}{\alpha} \right)$.

For $a \in K$  write $\underline{\sgn}(a)=(\chv{a})$. If $I=[\alpha, \beta]$ is an $\OK{L}$-ideal with the matrix $M$ as in \eqref{Eq:MatrixM}, we define the \emph{oriented $\OK{L}$-ideal determined by $[\alpha, \beta]$}  as 
\[\OrelIbasis{\alpha,\beta}{M}=\left( [\alpha, \beta]; \chv{ \det M} \right).\]
Since for the principal ideal generated by $\gamma\in L$, it holds that $(\gamma)=[\gamma,\gamma\Omega]$, it is easy to check that the oriented ideal determined by the principal ideal $(\gamma)$ is equal to
\[\OrelP{\gamma}=\left((\gamma);\chv{\gamma\cjg{\gamma}} \right).\]
Note that, for any given $\OK{L}$-ideal $I$ and any choice of $\ve_i\in\{\pm1\}$, $1\leq i\leq r$, it is possible to find an $\OK{K}$-module basis $[\alpha,\beta]$ of $I$ (with the corresponding matrix $M$) such that $\sgn\sigma_i(\det M)=\ve_i$ for all $1\leq i \leq r$ (because there exist units of all signs in $K$). We call $\OrelI{I}{\ve}$ an \emph{oriented ideal}, and we set 
\[\begin{array}{rcl}
\OrelISet{L/K}&=&\left\{ \OrelI{I}{\ve} ~\left|~ I \text{ a fractional $\OK{L}$-ideal}, \ve_i\in\left\{\pm1\right\}, i=1,\dots,r  \right\} \right., \vspace{1mm}\\ 
\OrelPSet{L/K}&=&\left\{ \OrelPnorm{\gamma}{L/K} ~\left|~ \gamma \in L  \right\} \right.. \vspace{1mm}\\ 
\end{array}\]
Further we define the \emph{relative oriented class group} of the field extension $L/K$ as
\[\OrelCl{L/K}=\quotient{\OrelISet{L/K}}{\OrelPSet{L/K}},\]
where the multiplication on $\OrelISet{L/K}$ is defined componentwise as 
\[\OI{I}{\ve}\cdot\OI{J}{\delta} = \left(IJ; \ve_1\delta_1, \dots, \ve_r\delta_r \right).\]
The following lemma describes the inverses: 

\begin{lemma}[\!\!{\cite[Prop.~2.8~and~Lemma~2.19]{KZforms}}]\label{Lemma:InverseIdeals}
It holds that
\[
\OrelIbasis{\alpha,\beta}{M} \cdot \OrelIbasis{\frac{\cjg\alpha}{\det M}, \frac{-\cjg\beta}{\det M}}{M}
=\left( [1, \Omega]; +1, \dots, +1 \right).
\]
In particular, when considered as elements of $\OrelCl{L/K}$, the inverse to the oriented ideal $\OrelIbasis{\alpha,\beta}{M}$ is the oriented ideal $\OrelIbasis{\cjg\alpha, -\cjg\beta}{M}$.
\end{lemma}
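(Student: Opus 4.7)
The plan is to split the verification into two independent claims in $\OrelISet{L/K}$: first, that $I\cdot J=[1,\Omega]$ as $\OK{L}$-ideals, where $J=\left[\frac{\cjg{\alpha}}{\det M},\frac{-\cjg{\beta}}{\det M}\right]$; second, that the associated orientations multiply componentwise to $(+1,\dots,+1)$. The in-particular clause will then follow by observing that $[\cjg{\alpha},-\cjg{\beta}]$ differs from the basis of $J$ only by a totally positive principal factor.

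For the ideal identity, negating a basis element does not change the $\OK{K}$-module, so $[\cjg{\alpha},-\cjg{\beta}]=[\cjg{\alpha},\cjg{\beta}]=\cjg{I}$, the Galois conjugate ideal. Thus $J=\frac{1}{\det M}\cjg{I}$. Since $L/K$ is Galois of degree two, the product $I\cdot\cjg{I}$ is the extension to $\OK{L}$ of $\norm{L/K}{I}$; by Mann's theorem (as recalled just before \eqref{Eq:MatrixM}), $\norm{L/K}{I}$ is the principal $\OK{K}$-ideal generated by $\det M$. Hence $I\cdot J=\frac{1}{\det M}\cdot(\det M)\OK{L}=\OK{L}=[1,\Omega]$.

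For the orientations, I would compute the matrix $M'$ attached to the chosen basis of $J$. Setting $P=\begin{psmallmatrix}1&1\\ \cjg{\Omega}&\Omega\end{psmallmatrix}$, so that $MP=\begin{psmallmatrix}\cjg{\alpha}&\alpha\\ \cjg{\beta}&\beta\end{psmallmatrix}$ by \eqref{Eq:MatrixM}, a short manipulation---swap the columns by right-multiplication with $\begin{psmallmatrix}0&1\\1&0\end{psmallmatrix}$, negate the bottom row by left-multiplication with $\begin{psmallmatrix}1&0\\0&-1\end{psmallmatrix}$, and scale by $\frac{1}{\det M}$---yields
\[
M'=\frac{1}{\det M}\begin{pmatrix}1&0\\0&-1\end{pmatrix}MP\begin{pmatrix}0&1\\1&0\end{pmatrix}P^{-1},
\]
whence $\det M'=\frac{1}{\det M}$. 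Since $\det M\cdot\det M'=1\in\UPlusSet{K}$, for every real embedding $\sigma_i$ the values $\sigma_i(\det M)$ and $\sigma_i(\det M')$ share the same sign, and their product of signs is $+1$. Combined with $IJ=[1,\Omega]$, this gives the displayed equation.

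For the in-particular clause, note that the two ``inverse'' bases differ by the scalar $\det M\in K$: namely, $[\cjg{\alpha},-\cjg{\beta}]=(\det M)\cdot\left[\frac{\cjg{\alpha}}{\det M},\frac{-\cjg{\beta}}{\det M}\right]$. Because $(\det M)\cjg{(\det M)}=(\det M)^2$ is totally positive, the oriented principal ideal $\OrelP{\det M}$ equals $\left((\det M);+1,\dots,+1\right)$ and hence lies in $\OrelPSet{L/K}$; so the oriented ideal with basis $[\cjg{\alpha},-\cjg{\beta}]$ and the one with basis $\left[\frac{\cjg{\alpha}}{\det M},\frac{-\cjg{\beta}}{\det M}\right]$ represent the same class in $\OrelCl{L/K}$, which the first part identifies as the inverse of $[I]$. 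The only genuinely delicate step throughout is the sign bookkeeping, but it collapses cleanly: the reason the product works out to the trivial orientation is precisely that $\det M\cdot\det M'=1$ is a totally positive element of $K$, not merely of $L$.
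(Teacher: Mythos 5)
Your proof is correct. Note that for this lemma the paper itself offers no internal proof at all --- it is imported verbatim from \cite[Prop.~2.8 and Lemma~2.19]{KZforms} --- so there is no in-paper argument to compare against; what you have produced is a self-contained derivation using only facts the paper actually recalls, which is arguably more useful to a reader of this paper than the bare citation. Your decomposition is the natural one: the ideal-level identity follows from $[\cjg{\alpha},-\cjg{\beta}]=\cjg{I}$ together with the standard quadratic-extension fact $I\cdot\cjg{I}=\norm{L/K}{I}\OK{L}$ and Mann's theorem $\norm{L/K}{I}=(\det M)$, both of which the paper states just before the lemma; the orientation bookkeeping reduces to your computation $\det M'=\frac{1}{\det M}$ (which checks out, e.g.\ directly from $\cjg{\alpha'}\beta'-\alpha'\cjg{\beta'}=\frac{\cjg{\alpha}\beta-\alpha\cjg{\beta}}{(\det M)^2}$, using that $\det M\in K$ is Galois-fixed --- a slicker route than your matrix identity, though that identity is also correct). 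Two small consistency points worth making explicit: first, the statement's notation $\OrelIbasis{\frac{\cjg\alpha}{\det M}, \frac{-\cjg\beta}{\det M}}{M}$ displays the orientation as $\barsgn{\det M}$, and your $\det M'=\frac{1}{\det M}$ shows this agrees with the natural orientation of that basis, so the two readings coincide; second, your treatment of the \emph{in particular} clause is exactly right --- the scaling factor $\det M$ lies in $K$, so $\OrelP{\det M}=\left((\det M);+1,\dots,+1\right)\in\OrelPSet{L/K}$, and likewise one checks that the natural orientation of the basis $[\cjg{\alpha},-\cjg{\beta}]$ is again $\barsgn{\det M}$, matching the statement. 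The only step resting on an unproved external fact is $I\cdot\cjg{I}=\norm{L/K}{I}\OK{L}$; this is classical for quadratic extensions of Dedekind domains (check it prime by prime in the split, inert, and ramified cases), so invoking it is legitimate, but a citation or a one-line localization argument would make the write-up airtight.
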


\subsection{The bijection}

The following theorem describes a bijection between the set $\QFSet$ of equivalence classes of primitive quadratic forms of fixed discriminants and the relative oriented class group $\OrelCl{L/K}$, and consequently provides a group structure on the set $\QFSet$.

\begin{theorem}[\!\!{\cite[Thm.~3.6~and~Cor.~3.7]{KZforms}}]\label{Theorem:Bijection}
Let $K$ be a number field of narrow class number one with at least one real embedding and $D$ a fundamental element of $\OK{K}$. Set $L=K\big(\sqrt{D}\big)$ and $\DSet=\left\{u^2D ~|~ u\in\UPlusSet{K}\right\}$. We have a bijection 
\[
\begin{array}{ccc}
\QFSet
&\stackrel{1:1}{\longleftrightarrow} 
& \OrelCl{L/K} \vspace{3mm}\\
Q(x, y)=ax^2+bxy+cy^2 & \stackrel{\Psi}{\longmapsto} & \left(\left[a, \frac{-b+\sqrt{\Disc(Q)}}{2}\right];\barsgn{a}\right) \vspace{2mm}\\
\frac{\alpha\cjg{\alpha}x^2-(\cjg{\alpha}\beta+\alpha\cjg{\beta})xy+\beta\cjg{\beta}y^2}{\frac{\cjg{\alpha}\beta-\alpha\cjg{\beta}}{\sqrt{D}}} 
&\stackrel{\Phi}{\longmapsfrom}
&\left(\left[\alpha, \beta\right];\barsgn{\frac{\cjg{\alpha}\beta-\alpha\cjg{\beta}}{\sqrt{D}}}\right)   
\end{array}
\]
Under this bijection, $\QFSet$ carries a group structure arising from the multiplication of ideals in $L$. The identity element of this group is represented by the quadratic form $Q_{\id}(x,y)=x^2-(\Omega+\cjg{\Omega})xy+\Omega\cjg{\Omega}y^2$, and the inverse element to $ax^2+bxy+cy^2$ is the quadratic form $ax^2-bxy+cy^2$.
\end{theorem}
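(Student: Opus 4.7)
My plan is to verify $\Psi$ and $\Phi$ are well-defined on the two equivalence relations, then show they are mutually inverse, and finally identify the distinguished elements explicitly; the group structure on $\QFSet$ is transported from that of $\OrelCl{L/K}$ automatically. For $\Psi$, the first task is to confirm that $I_Q := \left[a, \tfrac{-b+\sqrt{\Disc Q}}{2}\right]$ is a fractional $\OK{L}$-ideal. Writing $\sqrt{\Disc Q} = u(\Omega-\cjg{\Omega})$ for some $u \in \UPlusSet{K}$ (which exists since $\Disc Q \in \DSet$), closure under multiplication by $\Omega$ is a routine linear-algebra check that uses $\gcd(a,b,c) \in \USet{K}$. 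The matrix $M$ from \eqref{Eq:MatrixM} for this basis satisfies $\det M = au$, so $\barsgn{\det M} = \barsgn{a}$, justifying the orientation. That $\Psi$ descends to $\QFSet$ amounts to checking that replacing $Q$ by $vQ(px+qy,rx+sy)$ with $v, ps-qr \in \UPlusSet{K}$ produces an ideal differing from $I_Q$ by a principal oriented factor.

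For $\Phi$, the key observation is that $I\cjg{I} = (\det M)$ as $\OK{K}$-ideals, so each of the three coefficients $\alpha\cjg{\alpha}/\det M$, $(\cjg{\alpha}\beta + \alpha\cjg{\beta})/\det M$, $\beta\cjg{\beta}/\det M$ lies in $\OK{K}$, and together they generate the unit ideal — hence the output is a primitive quadratic form in $\OK{K}[x,y]$, whose discriminant a short calculation shows to be exactly $D \in \DSet$. If the basis is changed to $[p\alpha+r\beta, q\alpha+s\beta]$ while preserving the orientation, then $ps-qr$ is necessarily in $\UPlusSet{K}$, and the new form is $\sim$-equivalent to the old via the transformation $(x,y) \mapsto (sx-qy, -rx+py)$ scaled by $(ps-qr)^{-1}$; likewise, replacing $[\alpha,\beta]$ by $[\gamma\alpha, \gamma\beta]$ with orientation adjusted by $\barsgn{\gamma\cjg{\gamma}}$ yields a $\sim$-equivalent form. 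This shows $\Phi$ descends to $\OrelCl{L/K}$.

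Mutual inversion is then a direct substitution: with $\alpha = a$ and $\beta = \tfrac{-b+\sqrt{\Disc Q}}{2}$, one computes $\alpha\cjg{\alpha}=a^2$, $\cjg{\alpha}\beta+\alpha\cjg{\beta}=-ab$, $\beta\cjg{\beta}=ac$, and $(\cjg{\alpha}\beta-\alpha\cjg{\beta})/\sqrt{D} = au$, giving $\Phi(\Psi(Q)) = \tfrac{1}{u}Q \sim Q$; the reverse composition is symmetric, the discrepancy being a principal oriented ideal. The identity follows since $\Phi([1,\Omega];+1,\dots,+1) = Q_{id}$, and the inverse claim reduces to observing $\Psi(ax^2-bxy+cy^2) = \left([a, \tfrac{b+\sqrt{\Disc Q}}{2}]; \barsgn{a}\right) = \left([\cjg{\alpha},-\cjg{\beta}]; \barsgn{\det M}\right)$, which by Lemma \ref{Lemma:InverseIdeals} is the oriented-ideal inverse of $\Psi(ax^2+bxy+cy^2)$. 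The main obstacle throughout is the meticulous sign-bookkeeping: the equivalence on forms sees only $\UPlusSet{K}$, while oriented ideals record a full $\{\pm1\}^r$-vector, so every basis change by a general $\mathrm{GL}_2(\OK{K})$-matrix must be paired with a principal-ideal adjustment whose component-wise signs compensate exactly, a reconciliation that repeatedly invokes the narrow-class-number-one assumption that units of all signs exist.
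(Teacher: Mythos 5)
This theorem is not proved in the present paper at all --- it is quoted from \cite{KZforms} (Thm.~3.6 and Cor.~3.7) --- and your outline reconstructs essentially the argument of that source: well-definedness of $\Psi$ and $\Phi$ with respect to both equivalence relations, the direct computations $\Phi\Psi(Q)=u^{-1}Q$ and $\Psi\Phi\left(\left[\alpha,\beta\right];\barsgn{\det M}\right)=\left(\frac{\cjg{\alpha}}{\det M}\right)\cdot\left(\left[\alpha,\beta\right];\barsgn{\det M}\right)$, and the identification of the identity and of inverses via Lemma~\ref{Lemma:InverseIdeals}. Your spot checks are all correct ($\det M=au$ so $\barsgn{\det M}=\barsgn{a}$, the coefficients $a^2$, $-ab$, $ac$, discriminant exactly $D$, and the $\UPlusSet{K}$-versus-$\{\pm1\}^r$ sign bookkeeping), so the proposal is sound and takes essentially the same route as the cited proof, with the understood caveat that the steps you flag as ``routine'' (closure of $\left[a,\frac{-b+\sqrt{\Disc(Q)}}{2}\right]$ under multiplication by $\Omega$, which needs $b\equiv uw \pmod 2$, and primitivity of the output of $\Phi$) are where \cite{KZforms} spends its detailed effort.
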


\section{Cubes over $\OK{K}$}\label{Sec:Cubes}

Bhargava \cite{BhargavaLawsI} introduced cubes of integers as elements of $\Z^2\otimes_{\Z}\Z^2\otimes_{\Z}\Z^2$; if we denote by $\{v_1, v_2\}$ the standard $\Z$-basis of $\Z^2$, then the cube
\begin{equation}\label{cube_111}
\cube{a_{111}}{a_{121}}{a_{112}}{a_{122}}{a_{211}}{a_{221}}{a_{212}}{a_{222}}{}
\end{equation}
with $a_{ijk}\in\Z$ can be viewed as the expression
$$\sum_{i,j,k=1}^2{a_{ijk}v_i\otimes v_j \otimes v_k},$$ 
which is an element of $\Z^2\otimes\Z^2\otimes\Z^2$. 

In this section, we generalize these to number fields: Consider $\OK{K}^2\otimes\OK{K}^2\otimes\OK{K}^2$, this time taking the tensor product over the ring $\OK{K}$, and represent its elements as cubes with vertices $a_{ijk}\in\OK{K}$. We will often refer to such a cube as in \eqref{cube_111} shortly by $(a_{ijk})$ tacitly assuming $a_{ijk}\in\OK{K}$ for all $i,j,k\in\{1,2\}$.

Considering a cube 
\begin{equation}\label{cube_abc}
\cubearb[1.1]{,}
\end{equation}
 it can be sliced in three different ways, which correspond to three pairs of $2\times2$ matrices:
\begin{equation*}\label{Eq:RiSi}
\begin{aligned}
R_1=
\begin{pmatrix}
a & b\\
c & d
\end{pmatrix}
, \ \ \ &
S_1=
\begin{pmatrix}
e & f\\
g & h
\end{pmatrix}, \\
R_2=
\begin{pmatrix}
a & e\\
c & g
\end{pmatrix}
, \ \ \ &
S_2=
\begin{pmatrix}
b & f\\
d & h
\end{pmatrix},\\
R_3=
\begin{pmatrix}
a & e\\
b & f
\end{pmatrix}
, \ \ \ &
S_3=
\begin{pmatrix}
c & g\\
d & h
\end{pmatrix}.
\end{aligned}
\end{equation*}
To each of these pairs, a binary quadratic form $Q_i(x,y)=-\det\left(R_ix-S_iy \right)$ can be assigned:
\begin{equation}\label{Eq:AttachedQFs}\begin{aligned}
Q_1(x,y)&=(bc-ad)x^2+(ah-bg-cf+de)xy+(fg-eh)y^2,\\
Q_2(x,y)&=(ce-ag)x^2+(ah+bg-cf-de)xy+(df-bh)y^2,\\
Q_3(x,y)&=(be-af)x^2+(ah-bg+cf-de)xy+(dg-ch)y^2.
\end{aligned}\end{equation}
Formally, we will look at the assignment of the triple of quadratic forms \eqref{Eq:AttachedQFs} to the cube \eqref{cube_abc} as a map $\widetilde{\Theta}$:

$$ \cubearb[0.8]{} \stackrel{\widetilde{\Theta}}{\longmapsto} \big(-\det(R_1x-S_1y), -\det(R_2x-S_2y), -\det(R_3x-S_3y) \big).$$

One can compute that all the quadratic forms in \eqref{Eq:AttachedQFs} have the same discriminant, namely
\begin{multline*}
\Disc(Q_i)
= a^2h^2+b^2g^2+c^2f^2+d^2e^2 \\- 2(abgh+acfh+aedh+bdeg+bfcg+cdef)+4(adfg+bceh)
\end{multline*}
for every $i=1,2,3$. Hence, we can define the \emph{discriminant} of a cube $A$ as the discriminant of any of the three assigned quadratic forms; we denote this value by $\Disc(A)$. Furthermore, we say that a cube is \emph{projective} if all the assigned quadratic forms are primitive.

Consider the group $\widetilde{\Gamma}=\Mat_2(\OK{K})\times\Mat_2(\OK{K})\times\Mat_2(\OK{K})$, where $\Mat_2(\OK{K})$ denotes the group of $2\times2$ matrices with entries from $\OK{K}$. This group has a~natural action on cubes: If $\pqrs$ is from the $i$-th copy of $\Mat_2(\OK{K})$, $1\leq i\leq3$, then it acts on the cube \eqref{cube_abc} by replacing $(R_i,S_i)$ by $(pR_i+qS_i, rR_i+sS_i)$. Note that the first copy acts on $(R_2,S_2)$ by column operations, and on $(R_3, S_3)$ by row operations. Hence, analogous to the fact that row and column operations on rectangular matrices commute, the three copies of $\Mat_2(\OK{K})$ in $\widetilde{\Gamma}$ commute with each other. Therefore, we can always decompose the action by $T_1\times T_2\times T_3 \in \widetilde{\Gamma}$ into three subsequent actions:
\begin{equation}\label{Eq:ActionDecomp}
T_1\times T_2\times T_3 = (\id\times \id\times T_3)(\id\times T_2\times \id)(T_1\times \id\times \id). 
\end{equation}
Thus, we will usually restrict our attention to the action with only one nontrivial copy.  

Consider the action by $\pqrs\times\id\times\id$ on the cube \eqref{cube_abc}; the resulting cube is
\begin{equation*}
\cube{pa+qe}{pb+qf}{pc+qg}{pd+qh}{ra+se}{rb+sf}{rc+sg}{rd+sh}{\hspace{7mm}.}
\end{equation*}
Let $(Q_1, Q_2, Q_3)=\widetilde{\Theta}(A)$, and let $\left(Q'_1, Q'_2, Q'_3\right)=\widetilde{\Theta}(A')$ with $A'=\left(\pqrs\times\id\times\id\right)(A)$; it is easy to compute that
\begin{align*}
Q'_1(x,y)	&=Q_1(px-ry, -qx+sy),\\ 
Q'_2(x,y) &=(ps-qr)\cdot Q_2(x,y),\\
Q'_3(x,y) &=(ps-qr)\cdot Q_3(x,y).
\end{align*}
We can see that 
$$\Disc\left(\left(\pqrs\times\id\times\id\right)(A) \right)=(ps-qr)^2\Disc(A). $$

Furthermore, for $t\in\OK{K}$, we understand by $tA$ the cube $A$ with all vertices multiplied by $t$. It is clear that $\Disc(tA)=t^4\Disc(A)$. We can summarize our observations into the following lemma.

\begin{lemma} \label{Lemma:ChangeDisc}
Let $A$ be a cube, $t\in\OK{K}$ and $T_1,T_2,T_3\in\Mat_2(\OK{K})$. Then
$$ \Disc\left(t\left(T_1\times T_2\times T_3\right)(A)\right)
=t^4(\det T_1)^2(\det T_2)^2(\det T_3)^2\Disc(A).$$
\end{lemma}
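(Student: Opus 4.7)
The plan is to reduce to cases that have either already been handled in the preceding discussion or are manifestly symmetric to them, and then combine multiplicatively.

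First, I would use the decomposition \eqref{Eq:ActionDecomp} to factor
\[
t(T_1\times T_2\times T_3) = t\cdot(\id\times\id\times T_3)(\id\times T_2\times\id)(T_1\times\id\times\id),
\]
so that, once we know how $\Disc$ transforms under (i) multiplication of the cube by a scalar $t\in\OK{K}$ and (ii) the action of a single nontrivial matrix in each of the three copies of $\Mat_2(\OK{K})$, the general formula follows just by iterating. For the scalar part, multiplying every vertex by $t$ scales each of the six matrices $R_i,S_i$ by $t$, so each $Q_i(x,y)=-\det(R_ix-S_iy)$ is scaled by $t^2$, and hence $\Disc(Q_i)$ is scaled by $t^4$; this accounts for the factor $t^4$.

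For the action of $\pqrs\times\id\times\id$, the calculation carried out in the paragraph immediately before the lemma shows that the associated quadratic forms transform as
\[
Q_1'(x,y) = Q_1(px-ry,-qx+sy),\qquad Q_i'(x,y)=(ps-qr)Q_i(x,y)\text{ for }i=2,3,
\]
and a direct discriminant computation (the same one used just above the lemma) gives $\Disc(A')=(ps-qr)^2\Disc(A)$. I would then observe that the construction of $(R_i,S_i)$ and hence of $Q_i$ is completely symmetric in the three \emph{axes} of the cube: the role played by the first copy of $\Mat_2(\OK{K})$ with respect to the slicing $(R_1,S_1)$ is played by the second (respectively third) copy with respect to $(R_2,S_2)$ (respectively $(R_3,S_3)$). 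Consequently the analogous computation for $\id\times\pqrs\times\id$ and $\id\times\id\times\pqrs$ yields the same conclusion, namely that the discriminant is multiplied by $(ps-qr)^2=(\det T_i)^2$.

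Combining these three single-copy statements with the scalar statement, and using the fact that the discriminant is a scalar (so the three multiplicative factors $(\det T_i)^2$ and $t^4$ simply multiply), yields
\[
\Disc\bigl(t(T_1\times T_2\times T_3)(A)\bigr)=t^4(\det T_1)^2(\det T_2)^2(\det T_3)^2\Disc(A).
\]
I do not anticipate any real obstacle here: the only possible friction is verifying the symmetry claim in the third paragraph, which can be avoided entirely by simply redoing the one-line determinant computation for the second and third copies; both are cosmetic variants of the computation already performed in the text.
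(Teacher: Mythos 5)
Your proposal is correct and is essentially the paper's own argument: the lemma there is stated as a summary of exactly the observations you invoke, namely the decomposition \eqref{Eq:ActionDecomp}, the computation of how $Q_1',Q_2',Q_3'$ transform under $\pqrs\times\id\times\id$ (with the two remaining copies handled by the symmetry of the slicings), and the scalar computation $\Disc(tA)=t^4\Disc(A)$. Nothing is missing; multiplying the factors $(\det T_i)^2$ and $t^4$ gives the stated formula just as in the text.
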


Let us denote
$$\GL=\left\{\left. T \in\Mat_2(\OK{K}) ~\right|~ \det T\in\UPlusSet{K} \right\}, $$
and consider the subgroup 
$$\Gamma=\GL\times\GL\times\GL$$
 of the group $\widetilde{\Gamma}$. Then $\Gamma$ acts on cubes. It follows from the computations above that, for $u\in\USet{K}$ and $T_1\times T_2\times T_3\in\Gamma$, cubes $A$ and $u(T_1\times T_2\times T_3)(A)$ give rise to equivalent triples of quadratic forms. This justifies the following definition.

\begin{definition}
We say that two cubes $A$ and $A'$ are \emph{equivalent} (which will be denoted by $A\sim A'$) if there exists $u\in\USet{K}$ and $T_1\times T_2\times T_3\in\Gamma$ such that $A'=u(T_1\times T_2\times T_3)(A)$.
\end{definition}

Comparing this definition with the results of Lemma~\ref{Lemma:ChangeDisc}, we see that the discriminants of equivalent cubes can differ by a square of a totally positive unit. This agrees with our previous definition:  Let us recall that we denote by $\DSet$ the set of all possible discriminants, i.e., the set $\left\{u^2\left(\Omega-\cjg{\Omega}\right)^2 ~\left|~ u\in\UPlusSet{K}\right.\right\}$. So if $A$ is a cube such that $\Disc(A)\in\DSet$, then all cubes $A'$ equivalent to $A$ satisfy $\Disc(A')\in\DSet$. We will denote by $\CSet$ the set of equivalence classes of projective cubes of discriminant in $\DSet$, i.e.,
$$\CSet=\quotient{\left\{\left. A\in\OK{K}^2\otimes\OK{K}^2\otimes\OK{K}^2~\right|~ A \text{ is projective, } \Disc(A)\in\DSet \right\}}{\sim} .$$ 
Note that if $(Q_1, Q_2, Q_3)$ and $(Q'_1, Q'_2, Q'_3)$ are the images of two equivalent cubes $A$ and $A'$ by $\widetilde{\Theta}$,
then $Q_i\sim Q_i'$ for $1\leq i\leq 3$. Hence, the map $\widetilde{\Theta}$ restricts to a map
\begin{equation}\label{Eq:ThetaDef}
\Theta: \CSet  \longrightarrow  \QFSet\times\QFSet\times\QFSet.
\end{equation}
To relax the notation, we often omit to write the equivalence classes; under ${A\in\CSet}$ we understand the cube $A$ which is a representative of the class $[A]$ in the set $\CSet$, and ${\Theta(A)=(Q_1, Q_2, Q_3)}$ actually means $\Theta\big([A]\big)=\big([Q_1], [Q_2], [Q_3]\big)$.

A cube $A$ is called \emph{reduced} if
$$A=\cubered[1.1]{}$$
for some $d,f,g,h\in\OK{K}$. The following lemma will help us to simplify some of the proofs.

\begin{lemma} \label{Lemma:CubeReduction}
Every projective cube is equivalent to a reduced cube.
\end{lemma}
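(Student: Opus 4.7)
The plan is a three-step reduction: first use primitivity to show that the front face $R_1 = \begin{psmallmatrix}a & b \\ c & d\end{psmallmatrix}$ has content a unit; then apply Smith normal form to $R_1$ via the second- and third-axis actions to achieve $a = 1$ and $b = c = 0$; finally clear the remaining entry $e$ using one first-axis operation.

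\textbf{Step 1.} Suppose a prime $\pi \in \OK{K}$ divides each of $a, b, c, d$. Looking at
\[
Q_2 = (ce - ag)x^2 + (ah + bg - cf - de)xy + (df - bh)y^2
\]
from \eqref{Eq:AttachedQFs}, each of the eight monomials appearing in the three coefficients of $Q_2$ contains one of $a, b, c, d$ as a factor, and so lies in $(\pi)$. Hence every coefficient of $Q_2$ lies in $(\pi)$, contradicting primitivity of $Q_2$. Therefore $\gcd(a, b, c, d) \in \USet{K}$.

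\textbf{Step 2.} Since $\OK{K}$ is a principal ideal domain (narrow class number one implies class number one) and the content of $R_1$ is a unit, $R_1$ admits a Smith normal form $P R_1 Q = \begin{psmallmatrix}\alpha & 0 \\ 0 & \delta\end{psmallmatrix}$ with $P, Q \in \GL_2(\OK{K})$, $\alpha \in \USet{K}$, and $\delta \in \OK{K}$. Multiplying $P$ and $Q$ by diagonal matrices of units of appropriate signs (possible because $\OK{K}$ contains units of every sign) arranges $\det P, \det Q \in \UPlusSet{K}$, hence $P, Q \in \GL$. Applying $P$ as a third-axis operation (left-multiplication on $R_1$ and $S_1$) and $Q$ as a second-axis operation (right-multiplication) sends $R_1$ to $\begin{psmallmatrix}\alpha & 0 \\ 0 & \delta\end{psmallmatrix}$ and $S_1$ to some $P S_1 Q = \begin{psmallmatrix}e' & f' \\ g' & h'\end{psmallmatrix}$. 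Rescaling the whole cube by $u = \alpha^{-1} \in \USet{K}$ then normalises the new $(1,1,1)$-entry to~$1$, so after this step $a = 1$ and $b = c = 0$.

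\textbf{Step 3.} Apply the first-axis matrix $\begin{psmallmatrix}1 & 0 \\ -e' & 1\end{psmallmatrix}$, where $e'$ denotes the (possibly rescaled) top-left entry of the current $S_1$. Its determinant is $1 \in \UPlusSet{K}$, so it lies in $\GL$, and its action is $R_1 \mapsto R_1$ and $S_1 \mapsto -e' R_1 + S_1$; the top-left entry of the new $S_1$ becomes $-e' \cdot 1 + e' = 0$. The resulting cube has $a = 1$ and $b = c = e = 0$, so it is reduced.

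\textbf{Main obstacle.} The key insight is Step 1: primitivity of $Q_2$ (equivalently $Q_3$, but notably \emph{not} $Q_1$ alone) directly forces the front face $R_1$ to have unit content. With that established, the remainder is a routine Smith normal form computation over the PID $\OK{K}$ followed by a single cleanup operation along the first axis. The only technical point is maintaining the totally positive determinant constraint in $\GL$, which is always achievable via sign-adjustments by units---a property guaranteed by narrow class number one.
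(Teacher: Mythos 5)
Your proof is correct, and it reaches the reduced form by a genuinely different route than the paper, whose proof is only a sketch deferring to Bouyer's computation over $\Z$: there one starts from the weaker fact that projectivity gives $\gcd(a,b,c,d,e,f,g,h)=1$, maneuvers a unit into the vertex $a$ through a multi-step sequence of operations, and then clears $b$, $c$, $e$. Your Step 1 is the sharper observation: primitivity of $Q_2$ alone (every monomial $ce, ag, ah, bg, cf, de, df, bh$ in its coefficients contains a factor from $\{a,b,c,d\}$) already forces the single face $R_1$ to have unit content, which is strictly stronger than the unit gcd of all eight vertices and lets you invoke Smith normal form over the PID $\OK{K}$ to achieve $a=1$, $b=c=0$ in one stroke, after which clearing $e$ is a single first-axis shear of determinant $1$. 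What your approach buys is a self-contained argument replacing the citation of Bouyer's lengthy reduction, and it also makes explicit a point the paper's ``completely analogous'' claim glosses over: the SNF transformation matrices must be renormalized to lie in $\GL$, i.e.\ to have totally positive determinant, which you handle correctly by sign-adjusting diagonal unit matrices (available precisely because $K$ has narrow class number one), and your final rescaling by $\alpha^{-1}$ is legitimate because the cube equivalence permits multiplication by an arbitrary unit $u\in\USet{K}$, not merely a totally positive one. One trivial bookkeeping remark: the second-axis action by $\pqrs$ right-multiplies $R_1$ and $S_1$ by the transpose $\prqs$, so strictly you should feed $Q^{T}$ (suitably sign-adjusted) into the second copy of $\GL$ to realize right-multiplication by $Q$; this is immaterial, since transposition preserves the determinant.
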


\begin{proof}
In the case of cubes over $\Z$, the proof with full details can be found in \cite[Sec.~3.1]{Bouyer}. The proof in the case over $\OK{K}$ is completely analogous, because, under our assumptions on $K$, the ring $\OK{K}$ is a principal ideal domain. Here we only outline the rough idea.
Consider a projective cube with vertices $a, b, c, d, e, f, g, h\in\OK{K}$ as in \eqref{cube_abc}.
It follows from projectivity that $\gcd(a, b, c, d, e, f, g, h)=1$; hence, we can find a~cube equivalent to the original one with $1$ in the place of $a$. This can be used to clear out the vertices $b$, $c$ and $e$.
\end{proof}

We point out one specific cube, and that is the cube
\begin{equation}\label{Eq:cubeid}
\cubeid;
\end{equation}
since this cube is triply symmetric, all the three quadratic forms assigned to this cube are equal to 
\[Q_{\id}(x,y)=x^2-\left(\Omega+\cjg{\Omega}\right)xy+\Omega\cjg{\Omega}y^2,\] 
the representative of the identity element in the group $\QFSet$. Therefore, we will denote the cube in \eqref{Eq:cubeid} by $A_{\id}$.
 
\section{Composition of binary quadratic forms through cubes}\label{Sec:QFcomposition}

In \cite[Sec.~2.2]{BhargavaLawsI}, Bhargava establishes \emph{Cube Law}, which says that the composition of the three binary quadratic forms arising from one cube is the identity quadratic form. He shows that, in the case of projective cubes, this law is equivalent to Gauss composition by using Dirichlet's interpretation. In our case, the coefficients of the quadratic forms lie in $\OK{K}$ instead of $\Z$, and hence we are not allowed to use Dirichlet composition without reproving a generalized version. Instead of following this path, we will use the bijective maps $\Psi$ and $\Phi$ between $\QFSet$ and $\OrelCl{L/K}$ from Theorem~\ref{Theorem:Bijection}, and show that the product of the three obtained ideal classes is a principal ideal class.

\begin{lemma}\label{Lemma:ProductOfIdeals}
Let $A\in\CSet$, and denote $\Theta(A)=(Q_1, Q_2, Q_3)$. Let $\mathfrak{J}_i$ be the image of $Q_i$ under the map $\Psi$, $1\leq i\leq 3$. Then there exists an element $\omega\in L$ such that $\mathfrak{J}_1\mathfrak{J}_2\mathfrak{J}_3=\OrelP{\omega}$.
\end{lemma}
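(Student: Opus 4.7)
The plan is to invoke Lemma \ref{Lemma:CubeReduction} to reduce to a reduced representative of $[A]$, and then produce $\omega$ explicitly, exploiting the fact that the three ideals obtained via $\Psi$ share a common second basis element. So assume $A$ is reduced, with $a=1$, $b=c=e=0$ and remaining entries $d,f,g,h\in\OK{K}$. Plugging into \eqref{Eq:AttachedQFs} gives
\[
Q_1=-dx^2+hxy+fgy^2,\quad Q_2=-gx^2+hxy+dfy^2,\quad Q_3=-fx^2+hxy+dgy^2,
\]
all of common discriminant $\Delta=h^2+4dfg\in\DSet$. Setting $\alpha=\tfrac{-h+\sqrt{\Delta}}{2}$ and choosing the sign of $\sqrt{\Delta}$ so that $\alpha\in\OK{L}$ (possible because $h^2\equiv\Delta\pmod{4}$), Theorem \ref{Theorem:Bijection} yields
\[
\mathfrak{J}_1=\bigl([-d,\alpha];\barsgn{-d}\bigr),\quad \mathfrak{J}_2=\bigl([-g,\alpha];\barsgn{-g}\bigr),\quad \mathfrak{J}_3=\bigl([-f,\alpha];\barsgn{-f}\bigr),
\]
together with the relations $\alpha\cjg{\alpha}=-dfg$ and $\alpha^2=-h\alpha+dfg$. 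My candidate is $\omega=\alpha$.

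For the containment $(\alpha)\subseteq\mathfrak{J}_1\mathfrak{J}_2\mathfrak{J}_3$, it suffices to exhibit $\alpha\in\mathfrak{J}_1\mathfrak{J}_2\mathfrak{J}_3$. Products of generators immediately yield $dfg,\,df\alpha,\,dg\alpha,\,fg\alpha\in\mathfrak{J}_1\mathfrak{J}_2\mathfrak{J}_3$; additionally, expanding $\alpha^3\in\mathfrak{J}_1\mathfrak{J}_2\mathfrak{J}_3$ via the minimal-polynomial relation gives $\alpha^3=h^2\alpha+dfg\cdot\alpha-h\cdot dfg$, whence $h^2\alpha\in\mathfrak{J}_1\mathfrak{J}_2\mathfrak{J}_3$ since both $dfg\cdot\alpha$ and $h\cdot dfg$ lie in the ideal. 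Projectivity of $A$ is precisely the statement that $\gcd(d,h,fg)$, $\gcd(g,h,df)$, $\gcd(f,h,dg)\in\USet{K}$; a hypothetical common prime divisor $\pi$ of the four coefficients $df,\,dg,\,fg,\,h^2$ would have to divide $h$ (from $h^2$) and then $f$ or $g$ (from $fg$), either case contradicting one of these three primitivity conditions. Hence $\gcd(df,dg,fg,h^2)\in\USet{K}$, giving $\alpha\in\mathfrak{J}_1\mathfrak{J}_2\mathfrak{J}_3$.

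Equality of ideals then follows by comparing relative norms: $\norm{L/K}{\mathfrak{J}_i}=(a_i)$, so $\norm{L/K}{\mathfrak{J}_1\mathfrak{J}_2\mathfrak{J}_3}=(dfg)=\norm{L/K}{(\alpha)}$, and two fractional $\OK{L}$-ideals with one contained in the other and of equal relative norm must coincide. Finally, the orientations match: the componentwise product $\barsgn{-d}\,\barsgn{-g}\,\barsgn{-f}=\barsgn{-dfg}=\barsgn{\alpha\cjg{\alpha}}$ is exactly the orientation of $\OrelP{\alpha}$. The main obstacle is the gcd computation, which as indicated reduces to the brief two-case analysis above.
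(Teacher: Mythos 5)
Your proof is correct and follows essentially the same route as the paper: reduce to a reduced cube via Lemma~\ref{Lemma:CubeReduction}, take $\omega=\frac{-h+\sqrt{D}}{2}$, show $\omega\in\mathfrak{J}_1\mathfrak{J}_2\mathfrak{J}_3$ by a gcd/primitivity argument using the minimal-polynomial relations for $\omega$, and conclude equality of ideals by a norm comparison. The only cosmetic differences are that you work directly with $\gcd(df,dg,fg,h^2)$ (the paper reduces $\gcd(df,dg,fg,dh,fh,gh,h^2)$ to $\gcd(df,dg,fg,h)$ via $\gcd(d,f,g,h)=1$), you replace the paper's appeal to Mann's theorem by an explicit multiplicativity-of-norms argument, and you make the matching of orientations explicit where the paper leaves it implicit.
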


\begin{proof}
Using Lemma~\ref{Lemma:CubeReduction}, we can assume that $A$ is reduced. Hence, the quadratic forms arising from this cube are
\begin{equation}\label{Eq:RedForms}\begin{aligned}
Q_1(x,y)&=-dx^2+hxy+fgy^2,\\
Q_2(x,y)&=-gx^2+hxy+dfy^2,\\
Q_3(x,y)&=-fx^2+hxy+dgy^2,
\end{aligned}\end{equation}
all of them having discriminant $D=h^2+4dfg$. Their images under the map $\Psi$ are the oriented ideals
\begin{equation}\label{Eq:OIdeals}\begin{aligned}
\mathfrak{J}_1&=\left(\left[-d, \frac{-h+\sqrt{D}}{2} \right]; \barsgn{-d} \right),\\
\mathfrak{J}_2&=\left(\left[-g, \frac{-h+\sqrt{D}}{2} \right]; \barsgn{-g} \right),\\
\mathfrak{J}_3&=\left(\left[-f, \frac{-h+\sqrt{D}}{2} \right]; \barsgn{-f} \right).\\
\end{aligned}\end{equation}
Denote by $J$ the product of the three (unoriented) ideals; from the multiplicativity of the relative norm follows $\norm{L/K}{J}=(-dfg)$. Set $\omega=\frac{-h+\sqrt{D}}{2}$. We will show that $J=(\omega)=[\omega, \omega\Omega]_{\OK{K}}$; by \cite[Cor.~to~Th.~1]{Mann}, it is sufficient to prove that $\omega \in J$ (because then necessarily $\omega\Omega\in J$ as well) and that $\norm{L/K}{\omega}=-dfg$. 

The latter is clear since $\norm{L/K}{\omega}=\frac14(h^2-D)$ and $D=h^2+4dfg$.
To show that $\omega \in J$, first note that
\begin{align*}
J
&= \left[-dfg, df\omega, dg\omega, fg\omega, -d\omega^2, -f\omega^2, -g\omega^2, \omega^3\right]_{\OK{K}}\\
&= \left[-dfg, df\omega, dg\omega, fg\omega, dh\omega, fh\omega, gh\omega, h^2\omega\right]_{\OK{K}},
\end{align*}
where we have used the relations
$$\omega^2=dfg-h\omega, \ \ \ \ \ \omega^3=-dfgh+(h^2+dfg)\omega.$$
Set $G=\gcd(df,dg,fg,dh,fh,gh,h^2)$; we want to show that $G$ is a unit, because then necessarily $\omega\in J$. It follows from primitiveness of the quadratic forms in \eqref{Eq:RedForms} that $\gcd(d,f,g,h)=1$; therefore, $G=\gcd(df,dg,fg,h)$. Let $p\in\OK{K}$ be a~prime dividing $G$. Since the quadratic form $Q_1$ is primitive and $p$ divides both $fg$ and $h$, we have that $p \nmid d$. But as $p\mid df$, it has to hold that $p\mid f$. Therefore, $p \mid \gcd(f,h,dg)=1$, a contradiction to $p$ being a prime. Hence, both $p$ and $G$ are units. 
\end{proof}

As a consequence of this lemma, we get a new description of the composition of quadratic forms.

\begin{theorem}\label{Theorem:QFCompFromCube}
Let $A\in\CSet$, and let $Q_1, Q_2, Q_3$ are the three quadratic forms arising from the cube $A$. Then their composition $Q_1Q_2Q_3$ is a representative of the identity element of the group $\QFSet$.
\end{theorem}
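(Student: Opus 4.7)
The plan is to exploit the bijection $\Psi\colon\QFSet\longrightarrow\OrelCl{L/K}$ from Theorem~\ref{Theorem:Bijection}. By construction, this bijection transports the composition of quadratic forms in $\QFSet$ into the multiplication of oriented ideal classes in $\OrelCl{L/K}$. Under this identification, the theorem becomes the assertion that the product $\Psi(Q_1)\Psi(Q_2)\Psi(Q_3)$ lies in the subgroup $\OrelPSet{L/K}$ of oriented principal ideals, i.e., it represents the trivial class in the quotient $\OrelCl{L/K}=\OrelISet{L/K}/\OrelPSet{L/K}$.

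Concretely, I would write $\mathfrak{J}_i=\Psi(Q_i)$ for $i=1,2,3$ and invoke Lemma~\ref{Lemma:ProductOfIdeals} to produce an element $\omega\in L$ with $\mathfrak{J}_1\mathfrak{J}_2\mathfrak{J}_3=\OrelP{\omega}$. Since $\OrelP{\omega}\in\OrelPSet{L/K}$ by definition, this product is the identity element of $\OrelCl{L/K}$. Pulling back through $\Psi$ then gives that $Q_1Q_2Q_3$ represents the identity of $\QFSet$, which by Theorem~\ref{Theorem:Bijection} is the class of $Q_{id}=x^2-(\Omega+\cjg{\Omega})xy+\Omega\cjg{\Omega}y^2$.

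All of the substantive work is concentrated in Lemma~\ref{Lemma:ProductOfIdeals}; the theorem itself reduces to a short application of that lemma combined with the dictionary between $\QFSet$ and $\OrelCl{L/K}$. The only point that could in principle require care is the bookkeeping of orientations: one needs that the signs attached to $\mathfrak{J}_1$, $\mathfrak{J}_2$, $\mathfrak{J}_3$ multiply out to match those of $\OrelP{\omega}$. However, this is already built into the conclusion of Lemma~\ref{Lemma:ProductOfIdeals}, whose statement is an equality of oriented ideals rather than of underlying fractional ideals, so no separate sign argument is needed.
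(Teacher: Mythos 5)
Your proposal is correct and follows exactly the paper's own argument: the paper likewise applies Lemma~\ref{Lemma:ProductOfIdeals} to conclude that $\Psi(Q_1Q_2Q_3)=\Psi(Q_1)\Psi(Q_2)\Psi(Q_3)$ is a principal oriented ideal, then pulls back through the bijection of Theorem~\ref{Theorem:Bijection} to identify $Q_1Q_2Q_3$ with the class of $Q_{id}$. Your remark that the orientation bookkeeping is already absorbed into the oriented-ideal equality in the lemma's conclusion is also accurate, so nothing is missing.
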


\begin{proof}
It follows from Lemma~\ref{Lemma:ProductOfIdeals} that $\Psi(Q_1Q_2Q_3)=\Psi(Q_1)\cdot\Psi(Q_2)\cdot\Psi(Q_3)$ is a~principal ideal. Therefore, by Theorem~\ref{Theorem:Bijection},
$$Q_1Q_2Q_3\sim\Phi\Psi(Q_1Q_2Q_3)=x^2-\left(\Omega+\cjg{\Omega}\right)xy+\Omega\cjg{\Omega}y^2,$$ 
i.e., $Q_1Q_2Q_3$ is a representative of the identity element of the group $\QFSet$.
\end{proof}

\section{Composition of cubes}

In this final section, we use the prepared tools to equip the set $\CSet$ with a group law. 

\subsection{Balanced ideals}\label{Sec:BalancedIdeals}

We devote this subsection to the other side of the desired correspondence: balanced triples of ideals. We will show that there are essentially three equivalent views on such a triple. 

Let $\mathfrak{I}_i=\left(I_i; \barsgn{\det M_i}\right)$, $1\leq i\leq3$, be oriented $\OK{L}$-ideals. We say that the triple $\left(\mathfrak{I}_1, \mathfrak{I}_2, \mathfrak{I}_3\right)$ is \emph{balanced} if $I_1I_2I_3=\OK{L}$ and $\det M_1\det M_2\det M_3\in\UPlusSet{K}$. (Note that this is a direct generalization of the definition in \cite[Subsec. 3.3]{BhargavaLawsI}.)

Let $\left(\mathfrak{I}'_1, \mathfrak{I}'_2, \mathfrak{I}'_3\right)$ be another balanced triple, where $\mathfrak{I}'_i=\left(I'_i; \barsgn{\det M'_i}\right)$ for $1\leq i\leq3$. The two balanced triples $\left(\mathfrak{I}_1, \mathfrak{I}_2, \mathfrak{I}_3\right)$ and $\left(\mathfrak{I}'_1, \mathfrak{I}'_2, \mathfrak{I}'_3\right)$  are \emph{equivalent} if there exist $\kappa_i\in L$, $1\leq i\leq3$, such that $\mathfrak{I}'_i=\kappa_i\mathfrak{I}_i$. Note that the equality
$$\OK{L}=I'_1I'_2I'_3=(\kappa_1I_1)(\kappa_2I_2)(\kappa_3I_3)=(\kappa_1\kappa_2\kappa_3)\OK{L}$$
implies that $\kappa_1\kappa_2\kappa_3\in\USet{L}$. Furthermore, since $\det M'_i=\norm{L/K}{\kappa_i}\det M_i$, we have
$\norm{L/K}{\kappa_1\kappa_2\kappa_3}\in\UPlusSet{K}.$ Also note that $\left(\mathfrak{I}_1, \mathfrak{I}_2, \mathfrak{I}_3\right)\sim \left(\frac{1}{\kappa}\mathfrak{I}_1, \kappa\mathfrak{I}_2, \mathfrak{I}_3\right)$ for any $\kappa\in L\backslash\{0\}$.

We will write $\left[\left(\mathfrak{I}_1, \mathfrak{I}_2, \mathfrak{I}_3\right)\right]$ for the equivalence class of balanced triples of oriented ideals. The set of all equivalence classes of balanced triples of oriented ideals together with ideal multiplication forms a group; we will denote this group by $\Bal$, i.e.,
$$\Bal=\left\{\big[ \left(\mathfrak{I}_1, \mathfrak{I}_2, \mathfrak{I}_3\right)\big]~\left|~ 
\left(\mathfrak{I}_1, \mathfrak{I}_2, \mathfrak{I}_3\right) \text{ a balanced triple of $\OK{L}$-ideals}  \right.\right\}.$$
On the other hand, by $\left([\mathfrak{J}_1], [\mathfrak{J}_2], [\mathfrak{J}_3]\right)$ we will mean a triple of equivalence classes of oriented ideals, i.e., a triple of elements of $\OrelCl{L/K}$. We restrict to triples such that $[\mathfrak{J}_1]\cdot [\mathfrak{J}_2]\cdot [\mathfrak{J}_3]=[(\OK{L};+1, \dots, +1)]$; in other words, $\mathfrak{J}_1\mathfrak{J}_2\mathfrak{J}_3$ is a principal oriented ideal for any choice of the representatives. Again, the set of all such triples forms a group; we will denote this group by $\TripI$, i.e.,
$$\TripI= \left\{ \big([\mathfrak{J}_1], [\mathfrak{J}_2], [\mathfrak{J}_3]\big) ~\left|~  
\exists \ \omega\in L \text{ s.t. } \mathfrak{J}_1\mathfrak{J}_2\mathfrak{J}_3=\OrelP{\omega} \right.\right\}.$$
We will show that these two groups, $\Bal$ and $\TripI$, are isomorphic.

\begin{proposition}\label{Prop:BalTripIso}
The maps
$$\begin{array}{ccc}
\Bal & \longleftrightarrow & \TripI
\vspace{3mm}\\ 
\big[\left(\mathfrak{I}_1, \mathfrak{I}_2, \mathfrak{I}_3\right)\big] & \stackrel{\varphi_1}{\longmapsto} & \big([\mathfrak{I}_1], [\mathfrak{I}_2], [\mathfrak{I}_3]\big),\vspace{4mm}\\
\big[\left(\frac{1}{\omega}\mathfrak{J}_1, \mathfrak{J}_2, \mathfrak{J}_3\right)\big] & \stackrel{\varphi_2}{\longmapsfrom} & 
\begin{array}{c}
\big([\mathfrak{J}_1], [\mathfrak{J}_2], [\mathfrak{J}_3]\big), \\
\mathfrak{J}_1\mathfrak{J}_2\mathfrak{J}_3=\OrelP{\omega}
\end{array}
\end{array}$$

are mutually inverse group homomorphisms.
\end{proposition}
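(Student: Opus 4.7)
The plan is to verify four items: well-definedness of each map, the two inverse identities $\varphi_1\varphi_2 = \id$ and $\varphi_2\varphi_1 = \id$, and the homomorphism property (which will be essentially immediate since the group operation on both $\Bal$ and $\TripI$ is componentwise ideal multiplication).

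For $\varphi_1$, well-definedness requires two checks. First, the triple $([\mathfrak{I}_1],[\mathfrak{I}_2],[\mathfrak{I}_3])$ lies in $\TripI$: the product of the underlying ideals is $\OK{L}$ and the product of orientations is $\barsgn{\det M_1 \det M_2 \det M_3} = (+,\dots,+)$ because $\det M_1 \det M_2 \det M_3 \in \UPlusSet{K}$, so $\mathfrak{I}_1\mathfrak{I}_2\mathfrak{I}_3 = \OrelP{1}$. Second, the equivalence $(\mathfrak{I}_1,\mathfrak{I}_2,\mathfrak{I}_3) \sim (\kappa_1\mathfrak{I}_1,\kappa_2\mathfrak{I}_2,\kappa_3\mathfrak{I}_3)$ replaces each $\mathfrak{I}_i$ by $\OrelP{\kappa_i}\cdot\mathfrak{I}_i$, so the classes in $\OrelCl{L/K}$ are unchanged.

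For $\varphi_2$, I would first check that the triple $\left(\tfrac{1}{\omega}\mathfrak{J}_1, \mathfrak{J}_2, \mathfrak{J}_3\right)$ is balanced: the underlying ideals multiply to $\OK{L}$ because $J_1 J_2 J_3 = (\omega)$, and for the orientation, scaling the first component by $1/\omega$ multiplies the first orientation by $\barsgn{1/\norm{L/K}{\omega}}$ while the product of the original orientations is $\barsgn{\norm{L/K}{\omega}}$ (by the hypothesis $\mathfrak{J}_1\mathfrak{J}_2\mathfrak{J}_3 = \OrelP{\omega}$), so the overall product is $(+,\dots,+)$. Independence of the choice of $\omega$ follows since any other valid $\omega'$ must differ from $\omega$ by a unit $u \in \USet{L}$ with $\norm{L/K}{u} \in \UPlusSet{K}$; the resulting triples are equivalent via $\kappa_1 = 1/u$, $\kappa_2 = \kappa_3 = 1$. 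Independence of the representatives $\mathfrak{J}_i \mapsto \OrelP{\gamma_i}\mathfrak{J}_i$ is handled similarly: $\omega$ gets replaced by $\omega\gamma_1\gamma_2\gamma_3$, and the scalars $\kappa_1 = 1/(\gamma_2\gamma_3)$, $\kappa_2 = \gamma_2$, $\kappa_3 = \gamma_3$ witness equivalence (their product equals $1$).

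For the compositions, $\varphi_1\varphi_2$ sends $([\mathfrak{J}_1],[\mathfrak{J}_2],[\mathfrak{J}_3])$ to $([\tfrac{1}{\omega}\mathfrak{J}_1],[\mathfrak{J}_2],[\mathfrak{J}_3]) = ([\mathfrak{J}_1],[\mathfrak{J}_2],[\mathfrak{J}_3])$ since $\OrelP{1/\omega}$ is principal; and $\varphi_2\varphi_1[(\mathfrak{I}_1,\mathfrak{I}_2,\mathfrak{I}_3)] = [(\mathfrak{I}_1,\mathfrak{I}_2,\mathfrak{I}_3)]$ by taking $\omega = 1$ (permitted by the balanced condition). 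The homomorphism property follows from componentwise multiplication together with the multiplicativity of $\omega \mapsto 1/\omega$ in the first slot. The main obstacle is the orientation bookkeeping in $\varphi_2$: scaling by $1/\omega$ alters the sign vector by $\barsgn{\norm{L/K}{\omega}^{-1}}$ rather than by a naive sign of $\omega$, so the definition of $\OrelP{\cdot}$ (which involves the relative norm) must be tracked carefully throughout the well-definedness arguments.
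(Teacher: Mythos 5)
Your proposal is correct and takes essentially the same approach as the paper: in both, the real content is the well-definedness of $\varphi_1$ and especially $\varphi_2$ (independence of the generator $\omega$, which can only change by a unit of $L$ with totally positive relative norm, and independence of the representatives $\mathfrak{J}_i$ via scalars $\kappa_i$ whose product is $1$), with the inverse identities and the homomorphism property treated as routine. You additionally spell out the parts the paper dismisses as ``clear,'' e.g.\ the orientation bookkeeping showing that $\left(\frac{1}{\omega}\mathfrak{J}_1, \mathfrak{J}_2, \mathfrak{J}_3\right)$ is genuinely balanced and the computations $\varphi_1\varphi_2=\id$, $\varphi_2\varphi_1=\id$ with $\omega=1$, which is a harmless strengthening rather than a different route.
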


\begin{proof}
Checking that $\varphi_1\circ\varphi_2=\id$ and $\varphi_2\circ\varphi_1=\id$ is easy, so we only need to show that the maps $\varphi_1$ and $\varphi_2$ are well-defined.

We start with the map $\varphi_1$: If a triple $\left(\mathfrak{I}_1, \mathfrak{I}_2, \mathfrak{I}_3\right)$ is balanced, then by the definition $\mathfrak{I}_1 \mathfrak{I}_2 \mathfrak{I}_3$ is the principal oriented ideal $(\OK{L};+1, \dots, +1)$. If we have two representatives of the same class in $\Bal$, $\left(\mathfrak{I}'_1, \mathfrak{I}'_2, \mathfrak{I}'_3\right)\sim\left(\mathfrak{I}_1, \mathfrak{I}_2, \mathfrak{I}_3\right)$, then $\mathfrak{I}'_i\sim\mathfrak{I}_i$, and hence $\big([\mathfrak{I}'_1], [\mathfrak{I}'_2], [\mathfrak{I}'_3]\big)=\big([\mathfrak{I}_1], [\mathfrak{I}_2], [\mathfrak{I}_3]\big)$.

To show that $\varphi_2$ is well-defined, assume that the oriented ideals $\mathfrak{J}_1$, $\mathfrak{J}_2$, $\mathfrak{J}_3$ satisfy $\mathfrak{J}_1\mathfrak{J}_2\mathfrak{J}_3=\OrelP{\omega}$; then $\frac{1}{\omega}\mathfrak{J}_1\mathfrak{J}_2\mathfrak{J}_3=(\OK{L};+1, \dots, +1)$. The definition of $\varphi_2$ does not depend on  the choice of the generator $\omega$ of the principal ideal, because any other generator has to be of the form $\mu\omega$ with $\mu\in\USet{L}$ and $\norm{L/K}{\mu}\in\UPlusSet{K}$, and hence $\left[\left(\frac{1}{\omega}\mathfrak{J}_1, \mathfrak{J}_2, \mathfrak{J}_3\right)\right]=\left[ \left(\frac{1}{\mu\omega}\mathfrak{J}_1, \mathfrak{J}_2, \mathfrak{J}_3\right)\right]$. Moreover, if we take other representatives,  $\big([\lambda_1\mathfrak{J}_1], [\lambda_2\mathfrak{J}_2], [\lambda_3\mathfrak{J}_3]\big)=\big([\mathfrak{J}_1], [\mathfrak{J}_2], [\mathfrak{J}_3]\big)$, then 
$$\varphi_2 \Big(\big([\lambda_1\mathfrak{J}_1], [\lambda_2\mathfrak{J}_2], [\lambda_3\mathfrak{J}_3]\big)\Big) 
= \left[\left(\frac{1}{\lambda_1\lambda_2\lambda_3\omega}\lambda_1\mathfrak{J}_1, \lambda_2\mathfrak{J}_2, \lambda_3\mathfrak{J}_3 \right)\right]
= \left[\left(\frac{1}{\omega}\mathfrak{J}_1, \mathfrak{J}_2, \mathfrak{J}_3 \right)\right].$$
Therefore, the map $\varphi_2$ does not depend on the choice of the representative, and so it is well-defined.
\end{proof}

\begin{proposition}\label{Prop:TripIiso}
The group $\TripI$ is isomorphic to $\OrelCl{L/K}\times\OrelCl{L/K}$.
\end{proposition}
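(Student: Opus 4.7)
The plan is to exhibit the isomorphism explicitly as the obvious one: projection onto the first two coordinates. Observe that $\OrelCl{L/K}$ is abelian (ideal multiplication is commutative and sign multiplication is componentwise), and $\TripI$ is by definition the kernel of the group homomorphism
\[
\mu: \OrelCl{L/K}\times\OrelCl{L/K}\times\OrelCl{L/K} \longrightarrow \OrelCl{L/K}, \qquad \big([\mathfrak{J}_1], [\mathfrak{J}_2], [\mathfrak{J}_3]\big) \longmapsto [\mathfrak{J}_1]\cdot[\mathfrak{J}_2]\cdot[\mathfrak{J}_3].
\]
Thus the general principle that $\ker\mu \cong G\times G$ (when $G$ is abelian) via projection to the first two coordinates will give the result.

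Concretely, I would define
\[
\pi: \TripI \longrightarrow \OrelCl{L/K}\times\OrelCl{L/K}, \qquad \big([\mathfrak{J}_1], [\mathfrak{J}_2], [\mathfrak{J}_3]\big) \longmapsto \big([\mathfrak{J}_1], [\mathfrak{J}_2]\big),
\]
which is a group homomorphism since multiplication in $\TripI$ is componentwise. For the inverse, I invoke Lemma~\ref{Lemma:InverseIdeals} to guarantee that every element of $\OrelCl{L/K}$ has an inverse, and define
\[
\psi: \OrelCl{L/K}\times\OrelCl{L/K} \longrightarrow \TripI, \qquad \big([\mathfrak{J}_1], [\mathfrak{J}_2]\big) \longmapsto \big([\mathfrak{J}_1], [\mathfrak{J}_2], [\mathfrak{J}_1]^{-1}[\mathfrak{J}_2]^{-1}\big).
\]
One must verify that the image lies in $\TripI$, i.e., that $[\mathfrak{J}_1]\cdot[\mathfrak{J}_2]\cdot[\mathfrak{J}_1]^{-1}[\mathfrak{J}_2]^{-1}$ equals the identity class $[(\OK{L};+1,\dots,+1)]$; this is immediate from commutativity.

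The remaining checks are routine: $\psi$ is a homomorphism (componentwise multiplication together with the fact that inversion is a homomorphism in the abelian group $\OrelCl{L/K}$), and $\pi\circ\psi=\id$ is obvious. For $\psi\circ\pi=\id$, given $\big([\mathfrak{J}_1], [\mathfrak{J}_2], [\mathfrak{J}_3]\big)\in\TripI$ we have $[\mathfrak{J}_3]=[\mathfrak{J}_1]^{-1}[\mathfrak{J}_2]^{-1}$ by definition of $\TripI$, so $\psi\big([\mathfrak{J}_1],[\mathfrak{J}_2]\big)$ recovers the original triple. There is no real obstacle here; the statement is essentially a formality once one recognizes $\TripI$ as the kernel of the multiplication map on an abelian group, and the only substantive input needed from the paper is the existence of inverses supplied by Lemma~\ref{Lemma:InverseIdeals}.
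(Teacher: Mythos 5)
Your proof is correct and follows essentially the same route as the paper: the paper likewise uses the projection $\big([\mathfrak{J}_1], [\mathfrak{J}_2], [\mathfrak{J}_3]\big) \mapsto \big([\mathfrak{J}_1], [\mathfrak{J}_2]\big)$ and observes that $[\mathfrak{J}_3]$ is uniquely determined as $\big[(\mathfrak{J}_1\mathfrak{J}_2)^{-1}\big]$. Your additional framing of $\TripI$ as the kernel of the multiplication map and the explicit two-sided inverse $\psi$ merely spell out details the paper leaves implicit.
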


\begin{proof}
The projection 
$$\begin{array}{ccc}
\TripI & \longrightarrow & \OrelCl{L/K}\times\OrelCl{L/K} \vspace{2mm}\\
\big([\mathfrak{J}_1], [\mathfrak{J}_2], [\mathfrak{J}_3]\big) & \longmapsto & \big([\mathfrak{J}_1], [\mathfrak{J}_2]\big)
\end{array}$$
is a group isomorphism, because for given $[\mathfrak{J}_1]$ and $[\mathfrak{J}_2]$, the equivalence class $[\mathfrak{J}_3]$ is given uniquely as $\left[(\mathfrak{J}_1\mathfrak{J}_2)^{-1} \right]$.
\end{proof}

\subsection{From ideals to cubes}\label{Subsec:ItoC}

Finally, we have prepared all the ingredients, and so we can start with the construction of the correspondence between cubes and ideals. As the first step, we will build (an equivalence class of) a cube from a balanced triple of ideals. In this section, we will closely follow the ideas of \cite[Sec.~3.3]{BhargavaLawsI}.

For $\alpha\in L$ define 
$$\tau(\alpha)=\frac{\alpha-\cjg{\alpha}}{\Omega-\cjg{\Omega}}.$$
If $\alpha=a+b\Omega$ for some $a,b\in K$, then the definition of $\tau$ says that $\tau(\alpha)=b$. 
It follows that $\tau$ is additive: For any $\alpha,\beta\in L$, it holds that
$$\tau(\alpha+\beta)=\tau(\alpha)+\tau(\beta).$$
Furthermore, note that if $\OrelIbasis{\alpha,\beta}{M}$ is an oriented ideal, then we can express $\det M$ as $\tau\left(\cjg{\alpha}\beta\right)$. For an oriented ideal, we will often write $[\alpha, \beta]$ instead of $\OrelIbasis{\alpha,\beta}{M}$, the orientation of the ideal implicitly given.

We would like to construct a cube from a given balanced triple of oriented ideals. Consider the following map:
\begin{equation}\label{Eq:PhiDef}\begin{array}{cccc}
\Phi': & \Bal & \longrightarrow & \CSet \vspace{2mm}\\
& \big( [\alpha_1, \alpha_2], [\beta_1, \beta_2], [\gamma_1, \gamma_2] \big) &
\longmapsto & \big(\tau(\alpha_i\beta_j\gamma_k)\big)
\end{array}\end{equation}
(on both sides, the equivalence classes are omitted). The aim of this section is to prove that this map is well-defined. First, we will show how an action by $\widetilde{\Gamma}$ on bases of triples of ideals translates to an action on cubes.

\begin{lemma}\label{Lemma:ChangeBasis}
If $A$ is the image  of $\big( [\alpha_1, \alpha_2], [\beta_1, \beta_2], [\gamma_1, \gamma_2] \big)$ under the map $\Phi'$, then the image of 
$$\big( [p_1\alpha_1+r_1\alpha_2, q_1\alpha_1+s_1\alpha_2], [p_2\beta_1+r_2\beta_2, q_2\beta_1+s_2\beta_2], [p_3\gamma_1+r_3\gamma_3, q_3\gamma_1+s_3\gamma_2] \big)$$
under the map $\Phi'$ is the cube 
$$\left(\begin{psmallmatrix}p_1 &r_1\\ q_1&s_1\end{psmallmatrix} \times \begin{psmallmatrix}p_2 &r_2\\ q_2&s_2\end{psmallmatrix} \times \begin{psmallmatrix}p_3 &r_3\\ q_3&s_3\end{psmallmatrix} \right)(A).$$
\end{lemma}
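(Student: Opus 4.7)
The plan is to exploit both the additivity of $\tau$ and the decomposition \eqref{Eq:ActionDecomp} of a triple action as three commuting single-slot actions. On the ideal side, a change of basis of $\big([\alpha_1,\alpha_2],[\beta_1,\beta_2],[\gamma_1,\gamma_2]\big)$ decomposes analogously into three independent basis changes---one per ideal---so it suffices to verify the claim when only one of the matrices differs from the identity. By the symmetry of the construction of $\Phi'$ in the three factors (the $(i,j,k)$-entry of the cube is $\tau(\alpha_i\beta_j\gamma_k)$, treating the three basis labels on equal footing), it is then enough to treat the first slot.

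Accordingly, I would assume $(\beta_1,\beta_2)$ and $(\gamma_1,\gamma_2)$ are unchanged, while $\alpha_1'=p_1\alpha_1+r_1\alpha_2$ and $\alpha_2'=q_1\alpha_1+s_1\alpha_2$. Writing $a_{ijk}=\tau(\alpha_i\beta_j\gamma_k)$ for the entries of $A$, and using that $\tau$ is $K$-linear (immediate from its definition together with the additivity stated in the preamble to \eqref{Eq:PhiDef}), one gets
\begin{align*}
a'_{1jk}&=\tau\big((p_1\alpha_1+r_1\alpha_2)\beta_j\gamma_k\big)=p_1a_{1jk}+r_1a_{2jk},\\
a'_{2jk}&=\tau\big((q_1\alpha_1+s_1\alpha_2)\beta_j\gamma_k\big)=q_1a_{1jk}+s_1a_{2jk}.
\end{align*}
Reading these identities back through the slicing $R_1=(a_{1jk})_{k,j}$, $S_1=(a_{2jk})_{k,j}$ yields $R_1'=p_1R_1+r_1S_1$ and $S_1'=q_1R_1+s_1S_1$, which is exactly the action of $\begin{psmallmatrix}p_1&r_1\\q_1&s_1\end{psmallmatrix}\times\id\times\id$ on $A$ as described in Section~\ref{Sec:Cubes}.

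The arguments for the second and third slots are strictly analogous, now using the slicings $R_2=(a_{i1k})_{k,i}$, $S_2=(a_{i2k})_{k,i}$ and $R_3=(a_{ij1})_{j,i}$, $S_3=(a_{ij2})_{j,i}$. The only part requiring real care is notational bookkeeping: one must match the positional convention of the matrix $\begin{psmallmatrix}p_i&r_i\\q_i&s_i\end{psmallmatrix}$ appearing in the lemma with the convention $(R_i,S_i)\mapsto(pR_i+qS_i,rR_i+sS_i)$ of Section~\ref{Sec:Cubes} (where the entries sit in $\pqrs$-positions), so that the correct scalars multiply $R_i$ and $S_i$ in the new slice-matrices. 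Beyond this bookkeeping, no genuinely new idea is required.
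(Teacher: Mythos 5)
Your proposal is correct and follows essentially the same route as the paper's proof: reduce via \eqref{Eq:ActionDecomp} and the symmetry of $\tau(\alpha_i\beta_j\gamma_k)$ to a single nontrivial slot, then use the $K$-linearity of $\tau$ to read off $b_{1jk}=p_1a_{1jk}+r_1a_{2jk}$ and $b_{2jk}=q_1a_{1jk}+s_1a_{2jk}$, matching the action of $\begin{psmallmatrix}p_1&r_1\\q_1&s_1\end{psmallmatrix}\times\id\times\id$ on the slices $(R_1,S_1)$. Your explicit attention to the slicing conventions $(R_i,S_i)$ and the transposed placement of the matrix entries is exactly the bookkeeping the paper handles by writing the resulting cube as $\left(\prqs\times\id\times\id\right)\big((a_{ijk})\big)$.
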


\begin{proof}
We will consider only the pair of balanced triples $\big( [\alpha_1, \alpha_2], [\beta_1, \beta_2], [\gamma_1, \gamma_2] \big)$ and $\big( [p\alpha_1+r\alpha_2, q\alpha_1+s\alpha_2], [\beta_1, \beta_2], [\gamma_1, \gamma_2] \big)$; the rest follows from \eqref{Eq:ActionDecomp} and the symmetry. Denote $a_{ijk}=\tau(\alpha_i\beta_j\gamma_k)$, and let $(b_{ijk})$ be the cube which arises as the image of the balanced triple $\big( [p\alpha_1+r\alpha_2, q\alpha_1+s\alpha_2], [\beta_1, \beta_2], [\gamma_1, \gamma_2] \big)$ under the map $\Phi'$; then
\begin{align*}
b_{1jk}&=\tau((p\alpha_1+r\alpha_2)\beta_j\gamma_k)=pa_{1jk}+ra_{2jk},\\
b_{2jk}&=\tau((q\alpha_1+s\alpha_2)\beta_j\gamma_k)=qa_{1jk}+sa_{2jk},
\end{align*}
for any $j,k\in\{1,2\}$. Therefore,
\[(b_{ijk})=\left(\prqs\times\id\times\id \right)\big( (a_{ijk}) \big). \qedhere\]
\end{proof}

In the case over $\Z$, Bhargava \cite{BhargavaLawsI} says (in the proof of Theorem 11) that if the balanced triple of ideals is replaced by an equivalent triple, the resulting cube does not change. That is not completely true; as we will prove, the two resulting cubes indeed lie in the same equivalence class. But we cannot expect them to be equal, as shows the following example.

\begin{example}
Assume $K=\Q$, $L=\Q(\ii)$; then $\OK{L}=[1,\ii]$. Both of the triples of oriented ideals $B=\big( [1,\ii], [1,\ii], [1, \ii] \big)$ and $B'=\big( [\ii,-1], [1,\ii], [1, \ii] \big)$ are balanced; moreover, $B\sim B'$, because $[\ii, -1]=\ii\cdot[1, \ii]$, and $\ii$ is a unit from $L$ with (totally) positive norm. We have
$$\Phi'(B)=\cube0110100{-1}, \ \ \ \ \Phi'(B')=\cube100{-1}0{-1}{-1}0;$$
hence, we can see that $\Phi'(B)\neq\Phi'(B')$. But one can check that the cubes are equivalent under the action by $\id\times\begin{psmallmatrix} 0 &1 \\ -1 & 0 \end{psmallmatrix}\times\id$. 
\end{example}

\begin{proposition}
The map $\Phi'$ does not depend on the choice of the representative $\big( [\alpha_1, \alpha_2], [\beta_1, \beta_2], [\gamma_1, \gamma_2] \big)$ of a class in $\Bal$.
\end{proposition}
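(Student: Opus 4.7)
My plan is to separate the ambiguity in the choice of representative into two independent sources---the ordered $\OK{K}$-basis chosen for each oriented ideal (with its orientation fixed) and the balanced triple chosen within a given class in $\Bal$---and to show that each produces at worst a cube equivalent in $\CSet$. For the first source, any two orientation-preserving $\OK{K}$-bases of a fixed oriented ideal $\mathfrak{I}_i$ are related by a matrix of determinant in $\UPlusSet{K}$, hence in $\GL$; Lemma~\ref{Lemma:ChangeBasis} then realizes the difference between the two output cubes as an element of $\Gamma$, so it suffices to handle the second source with \emph{some} convenient choice of bases.

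For the second source, write the two triples as $(\mathfrak{I}_1, \mathfrak{I}_2, \mathfrak{I}_3)$ and $(\kappa_1\mathfrak{I}_1, \kappa_2\mathfrak{I}_2, \kappa_3\mathfrak{I}_3)$, and equip the second with the natural scaled bases $[\kappa_1\alpha_1,\kappa_1\alpha_2]$, $[\kappa_2\beta_1,\kappa_2\beta_2]$, $[\kappa_3\gamma_1,\kappa_3\gamma_2]$. The cube entries on the second side factor out the product:
$$\tau(\kappa_1\alpha_i\cdot\kappa_2\beta_j\cdot\kappa_3\gamma_k) = \tau(\mu\alpha_i\beta_j\gamma_k),\qquad \mu := \kappa_1\kappa_2\kappa_3.$$
The balanced condition on both triples forces $\mu\in\USet{L}$ and $\norm{L/K}{\mu}\in\UPlusSet{K}$; in particular, $\mu$ is a unit of $\OK{L}$ whose norm is totally positive, so $\mu\mathfrak{I}_1$ agrees with $\mathfrak{I}_1$ as oriented ideals. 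The problem therefore reduces to comparing the cube $(\tau(\alpha_i\beta_j\gamma_k))$ built from the basis $[\alpha_1,\alpha_2]$ with the cube $(\tau(\mu\alpha_i\beta_j\gamma_k))$ built from the alternative basis $[\mu\alpha_1,\mu\alpha_2]$ of the same oriented ideal $\mathfrak{I}_1$.

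The key computation is then direct: since $\mu$ is a unit of $\OK{L}$, multiplication by $\mu$ preserves $I_1$ and we may write $\mu\alpha_i = t_{i1}\alpha_1 + t_{i2}\alpha_2$ with $t_{ij}\in\OK{K}$; $K$-linearity of $\tau$ then gives
$$\tau(\mu\alpha_i\beta_j\gamma_k) = \sum_l t_{il}\,\tau(\alpha_l\beta_j\gamma_k) = \sum_l t_{il}\,a_{ljk},$$
so the new cube equals $(T\times\id\times\id)(A)$ with $T = (t_{ij})\in\Mat_2(\OK{K})$. The matrix $T$ is the matrix of multiplication by $\mu$ on the $K$-vector space $L$ in the basis $\{\alpha_1,\alpha_2\}$, so $\det T = \norm{L/K}{\mu}\in\UPlusSet{K}$, placing $T$ in $\GL$. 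The main subtle point, and the only spot where the norm condition on $\mu$ is indispensable, is precisely this final step: unit-ness of $\mu$ secures the $\OK{K}$-integrality of $T$, while total positivity of $\norm{L/K}{\mu}$ secures $T\in\GL$ rather than merely $T\in\Mat_2(\OK{K})$, which is exactly what is needed to conclude cube equivalence in $\CSet$.
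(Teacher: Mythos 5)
Your proposal is correct and follows essentially the same route as the paper: first reduce basis changes to Lemma~\ref{Lemma:ChangeBasis} (orientation-preserving bases differ by a matrix with determinant in $\UPlusSet{K}$), then collapse a general equivalence of balanced triples to a single unit $\mu=\kappa_1\kappa_2\kappa_3$ with $\norm{L/K}{\mu}\in\UPlusSet{K}$ acting on one ideal, and show the induced change of basis lies in $\GL$. The only cosmetic difference is how you get $\det T=\norm{L/K}{\mu}$ --- you invoke the standard fact that the determinant of the multiplication-by-$\mu$ endomorphism is the field norm, whereas the paper compares the two expressions of the relative norm of the ideal via $\det\widetilde{M}=(ps-qr)\det M$; these amount to the same computation.
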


\begin{proof}
First, we will prove that $\Phi'$ does not depend on the choice of the bases of the oriented ideals. If $[\omega_1, \omega_2]$ and $[p\omega_1+r\omega_2, q\omega_1+s\omega_2]$, $p,q,r,s\in\OK{K}$, are two bases of the same oriented ideal (in particular, with the same orientation), and $M$, $\widetilde{M}$, resp., the corresponding matrices, then $\det\widetilde{M}=(ps-qr)\det M$ and $\barsgn{\det\widetilde{M}}=\barsgn{\det M}$; thus necessarily $ps-qr\in\UPlusSet{K}$. Hence, consider a balanced triple $\big( [\alpha_1, \alpha_2], [\beta_1, \beta_2], [\gamma_1, \gamma_2] \big)$, and let $\big( [p_1\alpha_1+r_1\alpha_2, q_1\alpha_1+s_1\alpha_2], [p_2\beta_1+r_2\beta_2, q_2\beta_1+s_2\beta_2], [p_3\gamma_1+r_3\gamma_2, q_3\gamma_1+s_3\gamma_2] \big)$ be the same balanced triple with other choices of bases. Then $p_is_i-q_ir_i\in\UPlusSet{K}$, $1\leq i\leq3$, and it follows from Lemma~\ref{Lemma:ChangeBasis} that the images of these two balanced triples under the map $\Phi'$ are equivalent cubes.

Now, let $B=\left(\mathfrak{I}_1, \mathfrak{I}_2, \mathfrak{I}_3\right)$ and $B'=\left(\mathfrak{I}'_1, \mathfrak{I}'_2, \mathfrak{I}'_3\right)$ be two equivalent balanced triples. First, assume that $\mathfrak{I}_2=\mathfrak{I}'_2$ and $\mathfrak{I}_3=\mathfrak{I}'_3$; then there exists $\mu\in\USet{L}$ with $\norm{L/K}{\mu}\in\UPlusSet{K}$ and $\mathfrak{I}'_1=\mu\mathfrak{I}_1$. If $\mathfrak{I}_1=\OrelIbasis{\alpha_1,\alpha_2}{M_1}$, then $\mathfrak{I}'_1=\OrelIbasis{\mu\alpha_1,\mu\alpha_2}{M_1}$, and there exist $p,q,r,s\in\OK{K}$ such that $\mu\alpha_1=p\alpha_1+r\alpha_2$ and $\mu\alpha_2=q\alpha_1+s\alpha_2$. Comparing the two expressions of the norm of $\mathfrak{I}'_1$ regarding the bases $[\mu\alpha_1,\mu\alpha_2]$ and $[p\alpha_1+r\alpha_2,q\alpha_1+s\alpha_2]$, we get that
\begin{multline*}
\norm{L/K}{\mu}\det M_1=\norm{L/K}{[\mu\alpha_1,\mu\alpha_2]}\\=\norm{L/K}{[p\alpha_1+r\alpha_2,q\alpha_1+s\alpha_2]}=(ps-qr)\det M_1;
\end{multline*}
hence, $ps-qr=\norm{L/K}{\mu}$, and thus $ps-qr\in\UPlusSet{K}$. By the first part of the proof, the images of $\left(\mathfrak{I}_1, \mathfrak{I}_2, \mathfrak{I}_3\right)$ and $\left(\mu\mathfrak{I}_1, \mathfrak{I}_2, \mathfrak{I}_3\right)$ are equivalent cubes.

Finally, consider the general case: Assume $\mathfrak{I}'_i=\kappa_i\mathfrak{I}_i$ for some $\kappa_i\in L$, $1\leq i \leq 3$. If $B=\big( [\alpha_1, \alpha_2], [\beta_1, \beta_2], [\gamma_1, \gamma_2] \big)$, then $B'=\big( [\kappa_1\alpha_1, \kappa_1\alpha_2], [\kappa_2\beta_1, \kappa_2\beta_2], [\kappa_3\gamma_1, \kappa_3\gamma_2] \big)$. We have that 
\begin{align*}
\Phi'(B)&=\big(\tau(\alpha_i\beta_j\gamma_k)\big),\\
\Phi'(B')&=\big(\tau((\kappa_1\alpha_i)(\kappa_2\beta_j)(\kappa_3\gamma_k))\big)=\big(\tau((\kappa_1\kappa_2\kappa_3)\alpha_i\beta_j\gamma_k)\big).
\end{align*}
Therefore, $\Phi'(B')=\Phi'\big(\left((\kappa_1\kappa_2\kappa_3)\mathfrak{I}_1, \mathfrak{I}_2, \mathfrak{I}_3\right)\big)$, which is equivalent to $\Phi'(B)$ by the previous part of the proof, because $\kappa_1\kappa_2\kappa_3$ is a unit in $L$ of a totally positive norm. 
\end{proof}

\begin{proposition}
Let $B \in \Bal$. Then $\Disc{\Phi'(B)}\in\DSet$, and the cube $\Phi'(B)$ is projective.
\end{proposition}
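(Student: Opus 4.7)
The plan is to compute each of the three quadratic forms $Q_1,Q_2,Q_3$ attached to $A=\Phi'(B)$ and to recognize them, up to a totally positive unit factor, as the primitive norm forms $\Phi(\mathfrak{I}_i)$ from Theorem~\ref{Theorem:Bijection}. Write $\mathfrak{I}_1=[\alpha_1,\alpha_2]$, $\mathfrak{I}_2=[\beta_1,\beta_2]$, $\mathfrak{I}_3=[\gamma_1,\gamma_2]$ and let $M_1,M_2,M_3$ be the corresponding matrices from~\eqref{Eq:MatrixM}. Since the formula $a_{ijk}=\tau(\alpha_i\beta_j\gamma_k)$ is symmetric in the three axes of the cube, it suffices to treat $Q_1$; the statements for $Q_2,Q_3$ follow by the analogous argument with the roles of the ideals permuted.

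For $Q_1$ I would first establish the bilinear identity
\[
\tau(\mu\gamma_1)\tau(\nu\gamma_2) - \tau(\mu\gamma_2)\tau(\nu\gamma_1) = \frac{\mu\cjg{\nu}-\cjg{\mu}\nu}{\Omega-\cjg{\Omega}}\cdot\det M_3, \qquad \mu,\nu\in L,
\]
by writing $\mu,\nu,\gamma_1,\gamma_2$ in the $K$-basis $(1,\Omega)$: the left-hand side factors as a product of two $2\times 2$ determinants over $K$, one equal to $\det M_3$ and the other collapsing to $\mu_2\nu_1-\mu_1\nu_2=(\mu\cjg{\nu}-\cjg{\mu}\nu)/(\Omega-\cjg{\Omega})$ after the $w$-terms cancel. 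Applying this with $(\mu,\nu)=(\alpha_1\beta_2,\alpha_1\beta_1)$ and using $\mu\cjg{\nu}-\cjg{\mu}\nu=\norm{L/K}{\alpha_1}(\Omega-\cjg{\Omega})\det M_2$ yields the $x^2$-coefficient of $Q_1$ as $\norm{L/K}{\alpha_1}\det M_2\det M_3$; the substitution $(\mu,\nu)=(\alpha_2\beta_2,\alpha_2\beta_1)$ gives the $y^2$-coefficient $\norm{L/K}{\alpha_2}\det M_2\det M_3$. The middle coefficient $ah-bg-cf+de$ splits naturally into two brackets of the same shape; applying the identity twice and combining collapses their sum to $-(\alpha_1\cjg{\alpha_2}+\cjg{\alpha_1}\alpha_2)\det M_2\det M_3$, and the three coefficients assemble into
\[
Q_1(x,y) = \det M_2\cdot\det M_3\cdot\norm{L/K}{\alpha_1 x - \alpha_2 y}.
\]

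The conclusion is then immediate. By Theorem~\ref{Theorem:Bijection}, $\Phi(\mathfrak{I}_1)=\norm{L/K}{\alpha_1 x-\alpha_2 y}/\det M_1$ is primitive of discriminant $(\Omega-\cjg{\Omega})^2$, so $Q_1=(\det M_1\det M_2\det M_3)\cdot\Phi(\mathfrak{I}_1)$. Balancedness of $B$ gives $\det M_1\det M_2\det M_3\in\UPlusSet{K}$, whence $Q_1\sim\Phi(\mathfrak{I}_1)$ in $\QFSet$; in particular $Q_1$ is primitive and $\Disc(Q_1)=(\det M_1\det M_2\det M_3)^2(\Omega-\cjg{\Omega})^2\in\DSet$. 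The same reasoning handles $Q_2$ and $Q_3$, so $A$ is projective with $\Disc(A)\in\DSet$. The main technical obstacle I foresee is the sign bookkeeping and the $w$-term cancellations in the bilinear identity, together with the grouping that makes the middle coefficient collapse; once those are in place, the rest is formal linear algebra plus a single appeal to Theorem~\ref{Theorem:Bijection}.
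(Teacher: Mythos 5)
Your proof is correct, but it takes a genuinely different route from the paper's. I checked your bilinear identity (writing everything in the basis $(1,\Omega)$, both sides equal $(\mu\cjg{\nu}-\cjg{\mu}\nu)(\cjg{\gamma_1}\gamma_2-\gamma_1\cjg{\gamma_2})/(\Omega-\cjg{\Omega})^2$, with the $w$-terms cancelling exactly as you predict) and the three coefficient evaluations, which do yield $Q_1(x,y)=\det M_2\det M_3\cdot\norm{L/K}{\alpha_1x-\alpha_2y}$ and its analogues; the identification $\Phi(\mathfrak{I}_1)=\norm{L/K}{\alpha_1x-\alpha_2y}/\det M_1$ also matches Theorem~\ref{Theorem:Bijection}, so $Q_i=u\cdot\Phi(\mathfrak{I}_i)$ with $u=\det M_1\det M_2\det M_3\in\UPlusSet{K}$ indeed gives primitivity and $\Disc(Q_i)=u^2(\Omega-\cjg{\Omega})^2\in\DSet$ in one stroke. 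The paper proceeds quite differently and avoids all coefficient computations: it first evaluates $\Phi'$ on $\big([1,\Omega],[1,\Omega],[1,\Omega]\big)$, obtaining $A_{\id}$, then uses the basis relations $(\alpha_1,\alpha_2)^T=M_1(1,\Omega)^T$ (likewise for $M_2,M_3$) together with Lemma~\ref{Lemma:ChangeBasis} to write $\Phi'(B)=(M_1\times M_2\times M_3)(A_{\id})$, so that Lemma~\ref{Lemma:ChangeDisc} gives the discriminant claim immediately; projectivity is then handled by a separate arithmetic argument from the fundamentality of $D_\Omega$ (a non-projective cube would force $D_\Omega/p^2$ to be a quadratic residue modulo $4$). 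What is notable about your approach is that your norm-form computation is precisely the one the paper defers to the proof of Proposition~\ref{Prop:PsiPhi}, where it appears as $\det G(\zeta)=-\norm{L/K}{\zeta}\det M_1\det M_2$ and yields $Q_3=u\cdot\norm{L/K}{\gamma_1x-\gamma_2y}/\det M_3$: in effect you establish $\Theta\Phi'=(\Phi\times\Phi\times\Phi)\varphi_1$ already at this stage. So your route costs more bookkeeping here but would render Proposition~\ref{Prop:PsiPhi} immediate, and it derives projectivity from Theorem~\ref{Theorem:Bijection} rather than from fundamentality; the paper's route is cheaper at this point (a pure reduction to $A_{\id}$ via the group action on cubes) at the price of a separate projectivity argument and of redoing the computation later.
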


\begin{proof}
First, assume that $B=\left( [1, \Omega],[1, \Omega],[1, \Omega] \right)$. Then
$$\Phi'(B)=\cubeid{}=A_{\id};$$
therefore, $\Disc{\Phi'(B)}=\left(\Omega-\cjg{\Omega}\right)^2\in\DSet$.


Now, let 
$$B=\big( \OrelIbasis{\alpha_1, \alpha_2}{M_1}, \OrelIbasis{\beta_1, \beta_2}{M_2}, \OrelIbasis{\gamma_1, \gamma_2}{M_3} \big).$$
Then there exists $u\in\UPlusSet{K}$ such that $\det M_1 \det M_2 \det M_3=u$. Moreover, recall that
$$\begin{pmatrix}\alpha_1\\ \alpha_2\end{pmatrix}=M_1\cdot\begin{pmatrix}1\\ \Omega\end{pmatrix}, \ \ \ \ 
\begin{pmatrix}\beta_1\\ \beta_2\end{pmatrix}=M_2\cdot\begin{pmatrix}1\\ \Omega\end{pmatrix}, \ \ \ \ 
\begin{pmatrix}\gamma_1\\ \gamma_2\end{pmatrix}=M_3\cdot\begin{pmatrix}1\\ \Omega\end{pmatrix}.$$
It follows from Lemma~\ref{Lemma:ChangeBasis} that 
$$\Phi'(B)=\left(M_1\times M_2 \times M_3\right)(A_{\id}).$$
Therefore, by Lemma~\ref{Lemma:ChangeDisc}, the discriminant of the cube $\Phi'(B)$ is equal to 
$$(\det M_1)^2(\det M_2)^2(\det M_3)^2\Disc\left(A_{\id}\right)=u^2\left(\Omega-\cjg{\Omega}\right)^2,$$ 
and thus $\Disc{\Phi'(B)}\in\DSet$.

It remains to prove that the cube $\Phi'(B)$ is projective. 
If the cube $\Phi'(B)$ were not projective, then all the coefficients of the three assigned quadratic forms would be divisible by a prime $p\in\OK{K}$ (in fact, by a square of a prime), and $\frac{\Disc{\Phi'(B)}}{u^2p^2}=\frac{D_{\Omega}}{p^2}$ would be a quadratic residue modulo 4 in $\OK{K}$; that would contradict the fact that $D_\Omega$ is fundamental.
\end{proof}

\subsection{From cubes to ideals}\label{Subsec:CtoI}

As the second step in the construction of the correspondence, we need to recover a balanced triple of oriented ideals from a given cube $\left(a_{ijk}\right)$. For this purpose, Bhargava \cite[Proof~of~Thm.~11]{BhargavaLawsI} solves a system of equations
\begin{gather*}
\alpha_i\beta_j\gamma_k=c_{ijk}+a_{ijk}\Omega \\
\alpha_i\beta_j\gamma_k\cdot \alpha_{i'}\beta_{j'}\gamma_{k'}
		=\alpha_{i'}\beta_{j}\gamma_{k}\cdot \alpha_i\beta_{j'}\gamma_{k'}
		=\alpha_{i}\beta_{j'}\gamma_{k}\cdot \alpha_{i'}\beta_j\gamma_{k'}
		=\alpha_{i}\beta_{j}\gamma_{k'}\cdot \alpha_{i'}\beta_{j'}\gamma_k,
\end{gather*}
with $1\leq i, i', j, j', k, k' \leq 2$ and indeterminates $\alpha_i, \beta_j, \gamma_k, c_{ijk}$. That is computationally difficult with $a_{ijk}\in\Z$ already\footnote{\cite[App.~1.1]{Bouyer} mentions a program in Sage running for 24 hours.}, and, to our knowledge, impossible over $\OK{K}$ in general, so we will not follow this path. Instead of that, we will use our results from Theorems~\ref{Theorem:Bijection} and~\ref{Theorem:QFCompFromCube}, i.e., the bijection (group isomorphism) between $\OrelCl{L/K}$ and $\QFSet$, and the fact that the composition of the three quadratic forms arising from one cube lies within the same class as $Q_{\id}$; denote
$$\TripQF=\left\{\left.\big([Q_1], [Q_2], [Q_3]\big) ~\right|~ [Q_i] \in \QFSet, \ 1\leq i\leq3, \ [Q_1Q_2Q_3]=[Q_{\id}] \right\}.$$
It follows from Theorem~\ref{Theorem:QFCompFromCube} that the map $\Theta$ (defined as a restriction to the equivalence classes of the map $\widetilde\Theta$ which assigns the triple of quadratic forms to a cube, see \eqref{Eq:ThetaDef}) actually goes into $\TripQF$. The set $\TripQF$ together with the operation of componentwise composition forms a group. It is clear that if the classes $[Q_1],[Q_2]$ are given, then the class $[Q_3]$ is determined uniquely. Hence, similarly to Proposition~\ref{Prop:TripIiso}, we get:

\begin{proposition}\label{Prop:TripQFiso}
The group $\TripQF$ is isomorphic to the group $\QFSet\times\QFSet$.
\end{proposition}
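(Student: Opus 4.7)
The plan is to mirror the proof of Proposition~\ref{Prop:TripIiso} almost verbatim, exploiting the group structure on $\QFSet$ provided by Theorem~\ref{Theorem:Bijection}. The key observation is that the defining condition $[Q_1 Q_2 Q_3] = [Q_{\id}]$ in the definition of $\TripQF$ uniquely determines $[Q_3]$ once $[Q_1]$ and $[Q_2]$ are fixed, since $\QFSet$ is a group with identity $[Q_{\id}]$; explicitly $[Q_3] = [(Q_1 Q_2)^{-1}]$, and by Theorem~\ref{Theorem:Bijection} the inverse of $ax^2 + bxy + cy^2$ is represented by $ax^2 - bxy + cy^2$.

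Concretely, I would define the projection
\[
\pi: \TripQF \longrightarrow \QFSet \times \QFSet, \qquad \big([Q_1], [Q_2], [Q_3]\big) \longmapsto \big([Q_1], [Q_2]\big),
\]
and verify that it is a group homomorphism: this is immediate from the componentwise definition of the operation on $\TripQF$ inherited from $\QFSet$. Injectivity follows from the uniqueness of $[Q_3]$ discussed above: if $\pi\big([Q_1], [Q_2], [Q_3]\big) = \pi\big([Q_1], [Q_2], [Q_3']\big)$, then $[Q_3]$ and $[Q_3']$ both equal $[(Q_1 Q_2)^{-1}]$ in $\QFSet$, hence agree. For surjectivity, given any pair $\big([Q_1], [Q_2]\big) \in \QFSet \times \QFSet$, set $[Q_3] = [(Q_1 Q_2)^{-1}]$; then $[Q_1 Q_2 Q_3] = [Q_{\id}]$, so $\big([Q_1], [Q_2], [Q_3]\big)$ is a valid element of $\TripQF$ mapping to the prescribed pair.

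Since every step is a direct consequence of the group structure on $\QFSet$ already established in Theorem~\ref{Theorem:Bijection}, there is no genuine obstacle; the only thing worth stressing is the naturality, which is evident because $\pi$ is defined without any choice and intertwines the componentwise multiplications.
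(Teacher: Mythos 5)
Your proposal is correct and matches the paper's own argument: the paper likewise notes that $[Q_3]$ is uniquely determined by $[Q_1]$ and $[Q_2]$ via the group structure from Theorem~\ref{Theorem:Bijection} and then invokes the same projection isomorphism as in Proposition~\ref{Prop:TripIiso}. Your write-up merely spells out the injectivity and surjectivity checks that the paper leaves implicit.
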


Proposition~\ref{Prop:TripQFiso} was the last missing piece to close the cycle of maps as illustrated in Figure~\ref{Fig:DetailedScheme}.

\begin{figure}[ht]
\tikzstyle{oedge}=[->,shorten <=2pt, shorten >=3pt,>=angle 90]
\begin{tikzpicture}[scale=1.5]

\node (CD) at (0,0) {$\CSet$};
\node (TQF) at (3,0) {$\TripQF$};
\node (QFQF) at (6,0) {$\QFSet\times\QFSet$};
\node (ClCl) at (6,1) {$\OrelCl{L/K}\times\OrelCl{L/K}$};
\node (TI) at (3,1) {$\TripI$};
\node (Bal) at (0,1) {$\Bal$};

\draw[oedge]  (CD) [->]-- node [below] {\tiny{Th.~\ref{Theorem:QFCompFromCube}}}  (TQF);
\draw[oedge] (TQF) [<->] -- node [above] {$\simeq$} node [below] {\tiny{Prop.~\ref{Prop:TripQFiso}}}  (QFQF);
\draw[oedge] (QFQF)[->] to [out=35,in=315] node [right] {\footnotesize{$\Psi\times\Psi$}}  (ClCl);
\draw[oedge] (ClCl) [->] to [out=215,in=145] node [left] {\footnotesize{$\Phi\times\Phi$}} (QFQF); 
\draw[oedge] (ClCl) [<->] -- node [above] {$\simeq$} node [below] {\tiny{Prop.~\ref{Prop:TripIiso}}} (TI);
\draw[oedge] (TI) [<->] -- node [above] {$\simeq$} node [below] {\tiny{Prop.~\ref{Prop:BalTripIso}}} (Bal);
\draw[oedge] (Bal) [->] to [out=225,in=135] node [left] {\footnotesize{$\Phi'$}} (CD);
\draw[oedge] (CD) [->, dashed] to [out=45,in=305] node [right] {\footnotesize{$\Psi'$}} (Bal);

\node (text1) at (6, 0.55) {\tiny{1:1}};
\node (text2) at (6, 0.40) {\tiny{Th.~\ref{Theorem:Bijection}}};
	
\end{tikzpicture}
\caption{Diagram of the construction of the map $\Psi'$.}
\label{Fig:DetailedScheme}
\end{figure}

Going around the cycle in the diagram, we can formally define a map 
\begin{equation}\label{Eq:PsiDef}\begin{array}{cccc}
\Psi': &\CSet& \longrightarrow &\Bal\vspace{2mm}\\
& [A] & \longmapsto & \Big[\big(\Psi(Q_1),\Psi(Q_2), \left(\Psi(Q_1)\Psi(Q_2)\right)^{-1}\big) \Big],
\end{array}\end{equation}
where $Q_1$, $Q_2$ (and $Q_3$) are the quadratic forms arising from the cube $A$. This map is well-defined, because all the maps on the way are well-defined. 

\subsection{Conclusion}\label{Subsec:Conclusion}

At this point, just one last step is remaining, and that is to prove that both $\Psi'\circ\Phi'$ and $\Phi'\circ\Psi'$ are the identity maps. It is clear from Figure~\ref{Fig:DetailedScheme} that the map $\Psi\times\Psi\times\Psi$ (and also the map $\Phi\times\Phi\times\Phi$) provide a group isomorphism between $\TripI$ and $\TripQF$. In Figure~\ref{Fig:SimplifiedScheme}, we provide a simplification of the diagram displayed in Figure~\ref{Fig:DetailedScheme}.

\begin{figure}[h]
\tikzstyle{oedge}=[->,shorten <=2pt, shorten >=3pt,>=angle 90]
\begin{tikzpicture}[scale=1.5]

\node (CD) at (0,0) {$\CSet$};
\node (TQF) at (3,0) {$\TripQF$};
\node (TI) at (3,1) {$\TripI$};
\node (Bal) at (0,1) {$\Bal$};

\draw[oedge]  (CD) [->]-- node [above] {\footnotesize{$\Theta$}} (TQF);
\draw[oedge] (TI) [->] to [out=225,in=135] node [left] {\footnotesize{$\Phi\times\Phi\times\Phi$}} (TQF);
\draw[oedge] (TQF) [->] to [out=45,in=305] node [right] {\footnotesize{$\Psi\times\Psi\times\Psi$}}  (TI);
\draw[oedge] (TI) [<->] --  (Bal);
\draw[oedge] (Bal) [->] to [out=225,in=135] node [left] {\footnotesize{$\Phi'$}} (CD);
\draw[oedge] (CD) [->] to [out=45,in=305] node [right] {\footnotesize{$\Psi'$}} (Bal);

\end{tikzpicture}
\caption{Simplified diagram of the construction of the map $\Psi'$.}
\label{Fig:SimplifiedScheme}
\end{figure}

\begin{proposition}\label{Prop:PhiPsi}
$\Phi'\circ\Psi'$ is the identity map on $\CSet$.
\end{proposition}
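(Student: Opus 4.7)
The plan is to reduce to the case of a reduced cube via Lemma~\ref{Lemma:CubeReduction} and then perform a direct calculation. Let $A$ be a reduced cube with parameters $d, f, g, h \in \OK{K}$; the three attached quadratic forms $Q_1, Q_2, Q_3$ are those displayed in \eqref{Eq:RedForms}, and their images $\mathfrak{J}_1, \mathfrak{J}_2, \mathfrak{J}_3$ under $\Psi$ are the oriented ideals listed in \eqref{Eq:OIdeals}. Set $\omega = (-h+\sqrt{D})/2$ with $D = h^2 + 4dfg$ and $u := \tau(\omega)$; the element $u$ lies in $\UPlusSet{K}$ since $D \in \DSet$.

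By Lemma~\ref{Lemma:ProductOfIdeals}, $\mathfrak{J}_1\mathfrak{J}_2\mathfrak{J}_3 = \OrelP{\omega}$, so a balanced representative of $\Psi'([A])$ is $(\mathfrak{J}_1, \mathfrak{J}_2, \frac{1}{\omega}\mathfrak{J}_3)$. Using $\omega\cjg{\omega} = -dfg$ one rewrites $\frac{1}{\omega}\mathfrak{J}_3$ as the ideal $[-f/\omega,\, 1]$. I would take the $\OK{K}$-bases $(\alpha_1, \alpha_2) = (-d, \omega)$, $(\beta_1, \beta_2) = (-g, \omega)$, and $(\gamma_1, \gamma_2) = (-f/\omega, 1)$ in this precise order. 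A direct computation $\tau(\cjg{-f/\omega}) = u/(dg)$ shows that the orientation of the third ideal with respect to this ordered basis is $\barsgn{dg}$; together with the orientations $\barsgn{-d}$ and $\barsgn{-g}$ of the first two, the product $\barsgn{d^{2}g^{2}}$ is totally positive, so the triple is genuinely balanced.

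Applying $\Phi'$ then amounts to evaluating the eight entries $\tau(\alpha_i\beta_j\gamma_k)$. Using only $\omega\cjg{\omega} = -dfg$, $\omega + \cjg{\omega} = -h$, $\tau(c) = 0$ for $c \in K$, and $\tau(c\omega) = cu$ for $c \in K$, every entry can be evaluated mechanically: for instance $\tau(\alpha_1\beta_1\gamma_1) = \tau(-dfg/\omega) = \tau(\cjg{\omega}) = -u$ and $\tau(\alpha_2\beta_2\gamma_2) = \tau(\omega^{2}) = \tau(dfg - h\omega) = -hu$. Running through all eight cases yields the uniform answer $\tau(\alpha_i\beta_j\gamma_k) = -u\cdot a_{ijk}$, where $a_{ijk}$ are the entries of $A$. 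Thus $\Phi'(\Psi'([A]))$ is represented by the cube $-uA$; since $-u \in \USet{K}$, this cube is equivalent to $A$ by the very definition of $\sim$, which closes the proof.

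The main delicacy I foresee is the bookkeeping of orientations: swapping the basis $(-f/\omega, 1)$ to $(1, -f/\omega)$ would flip $\barsgn{\det M}$ and unbalance the triple, so the ordering has to be chosen deliberately. Once this choice is fixed, everything else amounts to routine algebraic simplification.
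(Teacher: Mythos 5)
Your proposal is correct and takes essentially the same route as the paper's own proof: reduce to a reduced cube via Lemma~\ref{Lemma:CubeReduction}, invoke Lemma~\ref{Lemma:ProductOfIdeals} to produce a balanced representative of $\Psi'([A])$, evaluate the eight entries $\tau(\alpha_i\beta_j\gamma_k)$ (your sample computations match the paper's Table of values), and conclude that the resulting cube is $-uA\sim A$. The only, immaterial, difference is that you absorb the factor $\frac{1}{\omega}$ into $\mathfrak{J}_3$ rather than into $\mathfrak{J}_1$ as in the paper's map $\varphi_2$, which is legitimate because $\left(\mathfrak{I}_1, \mathfrak{I}_2, \mathfrak{I}_3\right)\sim\left(\frac{1}{\kappa}\mathfrak{I}_1, \kappa\mathfrak{I}_2, \mathfrak{I}_3\right)$ and $\Phi'$ is well-defined on equivalence classes of balanced triples.
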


\begin{proof}
First, note that by using the result of Lemma~\ref{Lemma:ProductOfIdeals} and the isomorphism $\varphi_2$ from Proposition~\ref{Prop:BalTripIso}, we could have defined the map $\Psi'$ equivalently as
$$\begin{array}{cccc}
\Psi': &\CSet& \longrightarrow &\Bal\vspace{2mm}\\
& [A] & \longmapsto & \varphi_2\Big(\big([\Psi(Q_1)], [\Psi(Q_2)], [\Psi(Q_3)]\big) \Big);
\end{array}$$
the situation is depicted in Figure~\ref{Fig:PhiPsi}.

\begin{figure}[ht]
\tikzstyle{oedge}=[->,shorten <=2pt, shorten >=3pt,>=angle 90]
\begin{tikzpicture}[scale=1.1]

\node (A) at (-1,-1) {$\textcolor[rgb]{0.4,0.4,0.4}{[A]}$};
\node (QQQ) at (4.1, -1) {$\textcolor[rgb]{0.4,0.4,0.4}{\big([Q_1], [Q_2], [Q_3]\big)}$};
\node (PsiQ) at (4.1,3) {$\textcolor[rgb]{0.4,0.4,0.4}{\big([\Psi(Q_1)],[\Psi(Q_2)], [\Psi(Q_3)]\big)}$};
\node (PsiA) at (-1,3) {$\textcolor[rgb]{0.4,0.4,0.4}{\big[\left(\frac{1}{\omega}\Psi(Q_1),\Psi(Q_2), \Psi(Q_3)\right)\big]}$};

\draw[oedge] (A)[|->, opacity=0.6] -- (QQQ);
\draw[oedge] (QQQ)[|->, opacity=0.6] -- (PsiQ);
\draw[oedge] (PsiQ)[|->, opacity=0.6] -- (PsiA);
\draw[oedge] (A)[|->, opacity=0.6, dashed] -- (PsiA);

\node (CD) at (0,0) {$\CSet$};
\node (TQF) at (3,0) {$\TripQF$};
\node (TI) at (3,2) {$\TripI$};
\node (Bal) at (0,2) {$\Bal$};

\draw[oedge]  (CD) [->]-- node [above] {\footnotesize{$\Theta$}} (TQF);
\draw[oedge] (TQF) [->] -- node [left] {\footnotesize{$\Psi\times\Psi\times\Psi$}}  (TI);
\draw[oedge] (TI) [->] -- node [above] {\footnotesize{$\varphi_2$}}  (Bal);
\draw[oedge] (CD) [->, dashed] -- node [right] {\footnotesize{$\Psi'$}} (Bal);

\end{tikzpicture}
\caption{Situation in the proof of Proposition~\ref{Prop:PhiPsi}.}
\label{Fig:PhiPsi}
\end{figure}

Let $A$ be a representative of a class in $\CSet$. By Lemma~\ref{Lemma:CubeReduction}, we can assume without loss of generality that $A$ is reduced, i.e., 
$$A=\cubered[1.1].$$ Then
\begin{equation*}\begin{aligned}
\Psi(Q_1)&=\left(\left[-d, \frac{-h+\sqrt{D}}{2} \right]; \barsgn{-d} \right),\\
\Psi(Q_2)&=\left(\left[-g, \frac{-h+\sqrt{D}}{2} \right]; \barsgn{-g} \right),\\
\Psi(Q_3)&=\left(\left[-f, \frac{-h+\sqrt{D}}{2} \right]; \barsgn{-f} \right),\\
\end{aligned}\end{equation*}
and we have proved in Lemma~\ref{Lemma:ProductOfIdeals} that
$$\Psi(Q_1)\Psi(Q_2)\Psi(Q_3)=\left(\left(\frac{-h+\sqrt{D}}{2}\right); \barsgn{-dfg}\right),$$
where $D=\Disc(A)=h^2+4dfg$. Hence, if we apply the map $\varphi_2$ to the triple $\big([\Psi(Q_1)], [\Psi(Q_2)], [\Psi(Q_3)]\big)$, we obtain $\big[\left(\mathfrak{J}_1, \mathfrak{J}_2, \mathfrak{J}_3 \right)\big]$, where
$$\begin{aligned}
\mathfrak{J}_1&=\left(\left[\frac{-h-\sqrt{D}}{2fg}, 1 \right]; \barsgn{\frac{1}{fg}} \right),\\
\mathfrak{J}_2&=\left(\left[-g, \frac{-h+\sqrt{D}}{2} \right]; \barsgn{-g} \right),\\
\mathfrak{J}_3&=\left(\left[-f, \frac{-h+\sqrt{D}}{2} \right]; \barsgn{-f} \right).\\
\end{aligned}$$
Denote $B=\Phi'\Big(\big[\left(\mathfrak{J}_1, \mathfrak{J}_2, \mathfrak{J}_3 \right)\big]\Big)$. Using the facts that $D=u^2D_{\Omega}$ for a totally positive unit $u\in\UPlusSet{K}$, and that $\Omega=\frac{-w+\sqrt{D_{\Omega}}}{2}$, we compute that 
$$B=\cube{-u}00{-ud}0{-uh}{-ug}{-uh}{}$$ 
(see Table~\ref{Tab:CubeComp} for detailed computations). Noting that $B=-uA$, we have that $[A]=[B]$, and hence $\Phi'\circ\Psi'=\id_{\CSet}$.

\begin{table}
\setlength\extrarowheight{8pt}
\begin{tabular}{C|L|C} 
ijk & \text{Argument of the map } \tau & b_{ijk}\\[1pt]
\hline
111 & \frac{-h-\sqrt{D}}{2fg}(-g)(-f)=-\frac{h+\sqrt{D}}{2}=-\frac{uw+h}{2}-u\Omega &-u\\
112 & \frac{-h-\sqrt{D}}{2fg}(-g)\frac{-h+\sqrt{D}}{2} = \frac{D-h^2}{4f}=-dg & 0\\
121 & \frac{-h-\sqrt{D}}{2fg}\frac{-h+\sqrt{D}}{2}(-f) = \frac{D-h^2}{4g}=-df & 0\\
211 & fg & 0\\
122 & \frac{-h-\sqrt{D}}{2fg}\frac{-h+\sqrt{D}}{2}\frac{-h+\sqrt{D}}{2}=\frac{h^2-D}{4fg}\frac{-h+\sqrt{D}}{2}=-d\frac{-h+\sqrt{D}}{2}=d\frac{h-uw}{2} -ud\Omega& -ud\\
212 & (-g)\frac{-h+\sqrt{D}}{2}=g\frac{h-uw}{2}-ug\Omega & -ug \\
221 & \frac{-h+\sqrt{D}}{2}(-f)=f\frac{h-uw}{2}-uf\Omega & -uf \\
222 & \frac{-h+\sqrt{D}}{2}\frac{-h+\sqrt{D}}{2}=\frac{D+h^2}{4}-h\frac{\sqrt{D}}{2}=\frac{h^2+2dfg-uhw}{2}-hu\Omega & -uh\\
\end{tabular}
\vspace{3mm}
\caption{Computation of the cube $(b_{ijk})=\Phi'\Big(\big[\left(\mathfrak{J}_1, \mathfrak{J}_2, \mathfrak{J}_3 \right)\big]\Big)$.}
\label{Tab:CubeComp}
\end{table}
\end{proof}

\begin{proposition}\label{Prop:PsiPhi}
$\Psi'\circ\Phi'$ is the identity map on $\Bal$.
\end{proposition}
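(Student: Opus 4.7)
The plan is to reduce the statement $\Psi'\circ\Phi' = \id_\Bal$ to proving the diagram commutativity
$$\Theta\circ\Phi' \;=\; (\Phi\times\Phi\times\Phi)\circ\varphi_1.$$
Indeed, as observed at the beginning of the proof of Proposition~\ref{Prop:PhiPsi}, the map $\Psi'$ admits the equivalent description $\Psi' = \varphi_2\circ(\Psi\times\Psi\times\Psi)\circ\Theta$. Since $\Psi\times\Psi\times\Psi$ is inverse to $\Phi\times\Phi\times\Phi$ by Theorem~\ref{Theorem:Bijection} and $\varphi_2$ is inverse to $\varphi_1$ by Proposition~\ref{Prop:BalTripIso}, composing the above identity on the left with these inverses yields $\Psi'\circ\Phi' = \varphi_2\circ\varphi_1 = \id_\Bal$ immediately. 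The content of the displayed compatibility is that the three quadratic forms attached to the cube $\Phi'(B)$ should coincide, slot by slot, with the forms $\Phi(\mathfrak{I}_1), \Phi(\mathfrak{I}_2), \Phi(\mathfrak{I}_3)$ produced directly from the ideals of $B = (\mathfrak{I}_1, \mathfrak{I}_2, \mathfrak{I}_3)$.

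To verify this, I would take a representative $B = ([\alpha_1,\alpha_2], [\beta_1,\beta_2], [\gamma_1,\gamma_2])$ with associated matrices $M_1, M_2, M_3$ as in \eqref{Eq:MatrixM}. Lemma~\ref{Lemma:ChangeBasis} exhibits $\Phi'(B) = (M_1\times M_2\times M_3)(A_\id)$. Since each of the three forms attached to $A_\id$ is $Q_\id = (x-\Omega y)(x-\cjg\Omega y)$, the transformation rules displayed above Lemma~\ref{Lemma:ChangeDisc} (applied successively via \eqref{Eq:ActionDecomp}) compute the first attached form of $\Phi'(B)$ as $\det M_2 \det M_3$ times the substitution of $Q_\id$ determined by $M_1$. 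A short factorisation of $Q_\id$ produces
$$Q_1 \;=\; \det M_2 \det M_3 \cdot (\alpha_1 x - \alpha_2 y)(\cjg{\alpha_1} x - \cjg{\alpha_2} y).$$

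The last step is to match this with $\Phi(\mathfrak{I}_1)$ as given by Theorem~\ref{Theorem:Bijection}. Balancedness provides $u := \det M_1 \det M_2 \det M_3 \in \UPlusSet{K}$; with $D = u^2 D_\Omega$ and the sign of $\sqrt{D}$ forced to $\sqrt{D} = u(\Omega - \cjg\Omega)$ by the orientation condition embedded in the formula for $\Phi$, the denominator $(\cjg{\alpha_1}\alpha_2 - \alpha_1\cjg{\alpha_2})/\sqrt{D}$ collapses to $1/(\det M_2 \det M_3)$. Substituting into the formula for $\Phi$ gives $Q_1 = \Phi(\mathfrak{I}_1)$ on the nose, and the analogous identities for $Q_2, Q_3$ follow by the symmetry of the construction in the three slots. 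The main difficulty lies precisely here: reconciling the determinantal unit $\det M_2 \det M_3$ coming from the cube-action side with the denominator $(\cjg{\alpha_1}\alpha_2 - \alpha_1\cjg{\alpha_2})/\sqrt{D}$ from the formula for $\Phi$. Balancedness is what pins down $\sqrt{D}$ so that the two expressions agree, and this is the single place where the hypothesis on $B$ enters essentially.
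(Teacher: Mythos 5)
Your proof is correct, and its skeleton is the same as the paper's: the paper also reduces Proposition~\ref{Prop:PsiPhi} to the commutativity $\Theta\Phi' = (\Phi\times\Phi\times\Phi)\varphi_1$ (it names the two composites $\rho_1=(\Phi\times\Phi\times\Phi)\varphi_1$ and $\rho_2=\Theta\Phi'$ and shows $\rho_1=\rho_2$, whence $\Psi'\Phi'=\rho_1^{-1}\rho_2=\id$). Where you genuinely diverge is the verification of this commutativity. The paper works directly on the cube $\big(\tau(\alpha_i\beta_j\gamma_k)\big)$: for $\zeta\in L$ it forms the face matrix $G(\zeta)=\big(\tau(\alpha_i\beta_j\zeta)\big)$, computes $\det G(\zeta)=-\norm{L/K}{\zeta}\det M_1\det M_2$, and reads off $Q_3(x,y)=\norm{L/K}{\gamma_1x-\gamma_2y}\det M_1\det M_2$ (and its analogues) from $Q_3(x,y)=-\det\big(G(\gamma_1)x-G(\gamma_2)y\big)$. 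You instead recycle Lemma~\ref{Lemma:ChangeBasis} to get $\Phi'(B)=(M_1\times M_2\times M_3)(A_{\id})$ and push the factorization $Q_{\id}(x,y)=(x-\Omega y)(x-\cjg{\Omega}y)$ through the transformation rules for the $\widetilde{\Gamma}$-action via \eqref{Eq:ActionDecomp}; since $\alpha_1=p+q\Omega$, $\alpha_2=r+s\Omega$ when $M_1=\pqrs$, the substitution indeed yields $Q_1=\det M_2\det M_3\cdot\norm{L/K}{\alpha_1x-\alpha_2y}$, identical to the paper's formula. Your route buys economy (no new determinant computation; everything is already proved, and it mirrors how the paper itself handles the discriminant of $\Phi'(B)$), while the paper's $G(\zeta)$ computation is self-contained and makes the norm-form structure of each slicing visible without invoking the action formalism. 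One small imprecision to fix: your claim that the orientation ``forces'' $\sqrt{D}=u(\Omega-\cjg{\Omega})$ is not right, because orientations are blind to scaling by the totally positive unit $u$; with the fixed choice $\sqrt{D}=\Omega-\cjg{\Omega}$ implicit in Theorem~\ref{Theorem:Bijection} one gets $\Phi(\mathfrak{I}_1)=\norm{L/K}{\alpha_1x-\alpha_2y}/\det M_1$, so your $Q_1$ equals $u\cdot\Phi(\mathfrak{I}_1)$ rather than $\Phi(\mathfrak{I}_1)$ ``on the nose.'' This is harmless — $\TripQF$ consists of classes in $\QFSet$, and multiplication by $u\in\UPlusSet{K}$ is an equivalence — and the paper concludes in exactly this up-to-$u$ fashion, but the matching should be stated at the level of classes, not of forms.
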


\begin{proof}
We broadly follow the ideas of \cite[Sec.~5.4]{notes}. 
Let $\rho_1$ be the isomorphism between $\Bal$ and $\TripQF$ given by the map $(\Phi\times\Phi\times\Phi)\circ\varphi_1$, where $\varphi_1$ is the map defined in Proposition~\ref{Prop:BalTripIso}, and denote by $\rho_2$ the map $\Theta\circ\Phi'$ (see Figure~\ref{Fig:PsiPhi}); recall that the map $\Theta$ has been defined in \eqref{Eq:ThetaDef}. We will show that $\rho_1=\rho_2$; then $\Psi'\circ\Phi'=\id_{\mathcal{B}al\left(\OrelCl{L/K}\right)}$ follows, since $\Psi'\circ\Phi'=\rho_1^{-1}\circ\rho_2$.

\begin{figure}[ht]
\tikzstyle{oedge}=[->,shorten <=2pt, shorten >=3pt,>=angle 90]
\begin{tikzpicture}[scale=1.5]

\node (CD) at (0,0) {$\CSet$};
\node (TQF) at (3,0) {$\TripQF$};
\node (TI) at (3,1) {$\TripI$};
\node (Bal) at (0,1) {$\Bal$};
\node at (2.25,0.75) {$\rho_1$};
\node at (0.75,0.25) {$\rho_2$};

\draw[oedge] (CD) [->]-- node [below] {\footnotesize{$\Theta$}} (TQF);
\draw[oedge] (TI) [->] -- node [right] {\footnotesize{$\Phi\times\Phi\times\Phi$}}  (TQF);
\draw[oedge] (TI) [<-] -- node [above] {\footnotesize{$\varphi_1$}}  (Bal);
\draw[oedge] (Bal) [->] -- node [left] {\footnotesize{$\Phi'$}} (CD);
	
\draw[oedge] (Bal) [->, dashed] .. controls (2.2,0.7) ..(TQF);
\draw[oedge] (Bal) [->, dashed] .. controls (0.8,0.3)..(TQF);

\end{tikzpicture}
\caption{Maps $\rho_1$ and $\rho_2$.}
\label{Fig:PsiPhi}
\end{figure}

Consider a balanced triple $B\in\Bal$, and let 
$$B=\Big(\OrelIbasis{\alpha_1,\alpha_2}{M_1}, \OrelIbasis{\beta_1, \beta_2}{M_2}, \OrelIbasis{\gamma_1,\gamma_2}{M_3}\Big).$$
Then $\rho_1(B)$ is equal to
\begin{equation}\label{Eq:rho1}
\left( \left[\frac{\norm{L/K}{\alpha_1x-\alpha_2y}}{\det M_1}\right], \left[\frac{\norm{L/K}{\beta_1x-\beta_2y}}{\det M_2}\right], \left[\frac{\norm{L/K}{\gamma_1x-\gamma_2y}}{\det M_3}\right] \right).
\end{equation}

Let us compute $\rho_2(B)$. Recall that $\Phi'(B)=\big(\tau(\alpha_i\beta_j\gamma_k)\big)$ and $\tau(\alpha)=\frac{\alpha-\cjg{\alpha}}{\Omega-\cjg{\Omega}}$; for $\zeta\in L$, set
$$G(\zeta)=
\begin{pmatrix}  
\tau(\alpha_1\beta_1\zeta) & \tau(\alpha_2\beta_1\zeta)\\
\tau(\alpha_1\beta_2\zeta) & \tau(\alpha_2\beta_2\zeta)\\
\end{pmatrix}.$$
Then 
$$\det G(\zeta) = -\norm{L/K}{\zeta}\cdot\frac{\cjg{\alpha_1}\alpha_2-\alpha_1\cjg{\alpha_2}}{\Omega-\cjg{\Omega}}\cdot\frac{\cjg{\beta_1}\beta_2-\beta_1\cjg{\beta_2}}{\Omega-\cjg{\Omega}} = -\norm{L/K}{\zeta}\det M_1 \det M_2.$$
Note that $G(\gamma_1)$ is the upper face of the cube $\big(\tau(\alpha_i\beta_j\gamma_k)\big)$, and $G(\gamma_2)$ is the lower face; hence, if $Q_3$ denotes the third quadratic form assigned to this cube, it holds that
$$Q_3(x, y)=-\det\left(G(\gamma_1)x-G(\gamma_2)y \right).$$
Therefore,
\begin{equation}\label{Eq:Q3}
Q_3(x, y)=-\det\left(G(\gamma_1x-\gamma_2y) \right)=\norm{L/K}{\gamma_1x-\gamma_2y}\det M_1 \det M_2.
\end{equation}
Since $B$ is a balanced triple of ideals, there exists a totally positive unit $u\in\UPlusSet{K}$ such that $\det M_1 \det M_2\det M_3=u$. Thus, we can rewrite the expression \eqref{Eq:Q3} as
$$Q_3(x, y)=u\cdot\frac{\norm{L/K}{\gamma_1x-\gamma_2y}}{\det M_3}.$$

Similarly, we can prove that 
\begin{align*}
Q_1(x, y)&=u\cdot\frac{\norm{L/K}{\alpha_1x-\alpha_2y}}{\det M_1},\\
Q_2(x, y)&=u\cdot\frac{\norm{L/K}{\beta_1x-\beta_2y}}{\det M_2}.
\end{align*}
Thus, 
$$\rho_2(B)=\big([Q_1], [Q_2], [Q_3]\big);$$
comparing with \eqref{Eq:rho1}, we see that $\rho_1(B)=\rho_2(B)$.
\end{proof}

We can summarize our results; we need the term \emph{fundamental element}, which we have introduced in Definition~\ref{Def:AlmostSquare-free}.

\begin{theorem}\label{Theorem:BijectionCubes}
Let $K$ be a number field of narrow class number one with at least one real embedding. Let $D$ be a~fundamental element of $\OK{K}$. Set $L=K\big(\sqrt{D}\big)$, and $\DSet=\left\{u^2D ~|~ u\in\UPlusSet{K}\right\}$. Then we have a bijection between $\CSet$ and $\Bal$ given by the map $\Phi'$ defined in \eqref{Eq:PhiDef} (equivalently by the map $\Psi'$ defined in \eqref{Eq:PsiDef}).
\end{theorem}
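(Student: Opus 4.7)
The plan is to observe that Theorem \ref{Theorem:BijectionCubes} is essentially an assembly of results already established. The maps $\Phi'$ and $\Psi'$ have been shown to be well-defined in Sections \ref{Subsec:ItoC} and \ref{Subsec:CtoI} respectively: $\Phi'$ takes any balanced triple to a projective cube with discriminant in $\DSet$, and $\Psi'$ is defined by composing the map $\Theta$ (into $\TripQF$ by Theorem \ref{Theorem:QFCompFromCube}) with the isomorphisms $\Psi \times \Psi \times \Psi$ (using Theorem \ref{Theorem:Bijection}) and $\varphi_2$ (from Proposition \ref{Prop:BalTripIso}), each of which is well-defined on equivalence classes.

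Given this, the proof proceeds in essentially one step: invoke Propositions \ref{Prop:PhiPsi} and \ref{Prop:PsiPhi} to conclude that $\Phi' \circ \Psi' = \mathrm{id}_{\CSet}$ and $\Psi' \circ \Phi' = \mathrm{id}_{\Bal}$. Together these imply that both $\Phi'$ and $\Psi'$ are bijections, and that they are mutual inverses. In particular, the two descriptions of the bijection given in the theorem statement (via $\Phi'$ or via $\Psi'$) agree, since any map having a two-sided inverse has a unique one.

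There is no real obstacle remaining at this point, since the conceptual content is carried by the commutativity of the diagram in Figure \ref{Fig:SimplifiedScheme} and the concrete computations performed in the proofs of Propositions \ref{Prop:PhiPsi} (the reduced-cube calculation summarized in Table \ref{Tab:CubeComp}) and \ref{Prop:PsiPhi} (the identification $\rho_1 = \rho_2$). The proof will therefore be only a few lines, citing these two propositions and noting the resulting mutual inverse property. If desired, one may additionally remark that $\Phi'$ and $\Psi'$ are group homomorphisms with respect to the group structures on $\Bal$ (by ideal multiplication) and on $\CSet$ (transported from $\Bal$), so that the bijection is in fact a group isomorphism; this follows immediately from the fact that each intermediate map in Figure \ref{Fig:SimplifiedScheme} is a group homomorphism.
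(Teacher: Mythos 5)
Your proposal matches the paper's own proof, which likewise just cites Propositions~\ref{Prop:PhiPsi} and~\ref{Prop:PsiPhi} to conclude that $\Phi'$ and $\Psi'$ are mutually inverse bijections. The only detail the paper adds that you omit is the preliminary normalization: since the theorem allows an arbitrary fundamental element $D$, one first notes (as in the proof of Theorem~\ref{Theorem:Bijection}) that one may assume $D=D_{\Omega}$ without loss of generality, so that the preceding development applies verbatim.
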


\begin{proof}
Let $\Omega$ be such that $\OK{L}=[1, \Omega]$. Then there exists a unit $u\in\USet{K}$ (not necessarily totally positive) such that $D=u^2D_{\Omega}$; it follows that we have $\OK{L}=[1, u\Omega]$ and $u^2D_{\Omega}=(u\Omega-\cjg{u\Omega})^2$. If we begin with the canonical basis $[1, u\Omega]$ of $\OK{L}$ instead of $[1, \Omega]$, we get the same results for $D$ in the place of $D_{\Omega}$. Therefore, without loss of generality, we may assume that $D=D_{\Omega}$. 
In Propositions~\ref{Prop:PhiPsi} and~\ref{Prop:PsiPhi}, we have proved that $\Phi'$ and $\Psi'$  are mutually inverse bijections.
\end{proof}

\begin{corollary}
$\CSet$ carries a group structure arising from multiplication of ba\-lan\-ced triples of oriented ideals in $K(\sqrt{D})$. The identity element of this group is represented by the cube $A_{\id}$,
$$A_{\id}=\cubeid.$$
The inverse element to $\big[(a_{ijk})\big]$ is $\big[(-1)^{i+j+k}(a_{ijk})\big]$, i.e., 
$$\left[\cubearb[1.1]{}\right]^{-1}=\left[\cube{-a}{b}{c}{-d}{e}{-f}{-g}{h}{} \right].$$
\end{corollary}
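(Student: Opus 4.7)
The plan is to transport the group structure on $\Bal$ along the bijection established in Theorem~\ref{Theorem:BijectionCubes}. Since $\Bal$ is already known to be a group under componentwise multiplication of balanced triples, and $\Phi'\colon\Bal\to\CSet$ is a bijection with inverse $\Psi'$, declaring $[A]\cdot[A']:=\Phi'\big(\Psi'([A])\cdot\Psi'([A'])\big)$ defines a group structure on $\CSet$; this is the unique one making $\Phi'$ a group isomorphism. Hence all that remains is to identify the identity and the inverses explicitly.

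For the identity element, I would note that the identity of $\Bal$ is the class of the balanced triple $\big([1,\Omega],[1,\Omega],[1,\Omega]\big)$, and recall that the computation already carried out in Subsection~\ref{Subsec:ItoC} gives $\Phi'\big([1,\Omega],[1,\Omega],[1,\Omega]\big)=A_{\id}$. This immediately yields that $A_{\id}$ represents the identity of $\CSet$.

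For the inverse formula, I would apply Lemma~\ref{Lemma:InverseIdeals}: the inverse of the oriented ideal $\OrelIbasis{\alpha,\beta}{M}$ in $\OrelCl{L/K}$ is $\OrelIbasis{\cjg\alpha,-\cjg\beta}{M}$. Via the isomorphism $\varphi_1$ of Proposition~\ref{Prop:BalTripIso}, the inverse in $\Bal$ of $\big[([\alpha_1,\alpha_2],[\beta_1,\beta_2],[\gamma_1,\gamma_2])\big]$ is therefore represented by
\[
\big([\cjg{\alpha_1},-\cjg{\alpha_2}],\,[\cjg{\beta_1},-\cjg{\beta_2}],\,[\cjg{\gamma_1},-\cjg{\gamma_2}]\big).
\]
Applying $\Phi'$ to this triple, and using the elementary identity $\tau(\cjg\zeta)=-\tau(\zeta)$ (which follows directly from the definition $\tau(\zeta)=\frac{\zeta-\cjg\zeta}{\Omega-\cjg\Omega}$) together with multiplicativity of complex conjugation, the $(i,j,k)$-entry of the resulting cube equals
\[
\tau\big((-1)^{i-1}\cjg{\alpha_i}\cdot(-1)^{j-1}\cjg{\beta_j}\cdot(-1)^{k-1}\cjg{\gamma_k}\big)
=(-1)^{i+j+k-1}\tau(\cjg{\alpha_i\beta_j\gamma_k})
=(-1)^{i+j+k}\,a_{ijk},
\]
where $a_{ijk}=\tau(\alpha_i\beta_j\gamma_k)$ are the entries of the original cube $A=\Phi'\big([(\mathfrak{I}_1,\mathfrak{I}_2,\mathfrak{I}_3)]\big)$. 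This gives the claimed formula for the inverse.

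The only mildly delicate point is making sure the sign conventions in Lemma~\ref{Lemma:InverseIdeals} line up with the conventions used in the definition of $\Phi'$, in particular that the resulting triple is indeed the group-theoretic inverse in $\Bal$ (not just an inverse in $\OrelCl{L/K}\times\OrelCl{L/K}\times\OrelCl{L/K}$ with incorrect orientations); this follows from the precise statement of Lemma~\ref{Lemma:InverseIdeals}, which asserts the product equals $\big([1,\Omega];+1,\dots,+1\big)$ on the nose, i.e., the identity of $\Bal$. Everything else is bookkeeping, so I do not anticipate a serious obstacle beyond correctly tracking the signs in $\tau$.
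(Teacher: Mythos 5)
Your proposal is correct and follows essentially the same route as the paper: transport the group law through the bijection of Theorem~\ref{Theorem:BijectionCubes}, identify the identity as $\Phi'$ of the trivial triple $\big([1,\Omega],[1,\Omega],[1,\Omega]\big)$, and derive the inverse formula from Lemma~\ref{Lemma:InverseIdeals} together with the sign change of $\tau$ under conjugation. The only (harmless) deviation is your use of the unscaled representatives $[\cjg{\alpha_1},-\cjg{\alpha_2}]$, etc.: for these the componentwise products with the original ideals are $\left((\det M_i);+1,\dots,+1\right)$ rather than $\left([1,\Omega];+1,\dots,+1\right)$ \emph{on the nose} as you assert (the lemma's exact identity holds for the bases divided by $\det M_i$), but since scaling by $\frac{1}{\det M_i}\in K$ leaves the orientation unchanged, your triple is equivalent to the paper's and represents the same inverse class --- indeed this choice lets you bypass the paper's final rescaling by the unit $u=\det M_1\det M_2\det M_3$, and your identity $\tau\big(\cjg{\zeta}\big)=-\tau(\zeta)$ replaces the paper's coordinate computation with $\cjg{\Omega}=-\Omega-w$.
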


\begin{proof}
The group structure of $\CSet$ follows from Theorem~\ref{Theorem:BijectionCubes}. 

Obviously, the triple $\big([1, \Omega], [1, \Omega], [1, \Omega]\big)$ is a representative of the identity element in the group $\Bal$;
its image under the map $\Phi'$ is the class represented by the cube $A_{\id}$.

Consider a cube $(a_{ijk})$; by Theorem~\ref{Theorem:BijectionCubes}, there exists a balanced triple of ideals
$$B=\big( \OrelIbasis{\alpha_1, \alpha_2}{M_1}, \OrelIbasis{\beta_1, \beta_2}{M_2}, \OrelIbasis{\gamma_1, \gamma_2}{M_3} \big)$$
such that $\Phi'(B)=(a_{ijk})$, i.e., $a_{ijk}=\tau(\alpha_i\beta_j\gamma_k)$, $1\leq i, j, k \leq 2$. It follows from Lemma~\ref{Lemma:InverseIdeals} that the balanced triple inverse to $B$ is $B^{-1}=\big(\mathfrak{I}_1, \mathfrak{I}_2, \mathfrak{I}_3\big)$, where
\begin{align*}
\mathfrak{I}_1&=\OrelIbasis{\frac{\cjg{\alpha_1}}{\det M_1}, -\frac{\cjg{\alpha_2}}{\det M_1}}{M_1},\\
\mathfrak{I}_2&= \OrelIbasis{\frac{\cjg{\beta_1}}{\det M_2}, -\frac{\cjg{\beta_2}}{\det M_2}}{M_2},\\
\mathfrak{I}_3&= \OrelIbasis{\frac{\cjg{\gamma_1}}{\det M_3}, -\frac{\cjg{\gamma_2}}{\det M_3}}{M_3}. 
\end{align*}
Denote $u=\det M_1\det M_2\det M_3$. Then 
$$\Phi'\left(B^{-1}\right)=\Big(\tau\left((-1)^{i+j+k+1}u^{-1}\cjg{\alpha_i\beta_j\gamma_k}\right)\Big)=\Big((-1)^{i+j+k+1}u^{-1}\tau\left(\cjg{\alpha_i\beta_j\gamma_k} \right)\Big).$$
Multiplying the cube by $u$, we get an equivalent cube $\Big((-1)^{i+j+k+1}\tau\left(\cjg{\alpha_i\beta_j\gamma_k} \right)\Big)$; hence, we have to compute $\tau\left(\cjg{\alpha_i\beta_j\gamma_k}\right)$. If we write 
$$\alpha_i\beta_j\gamma_k=c_{ijk}+a_{ijk}\Omega,$$
for some $c_{ijk}\in\OK{K}$, $1\leq i, j, k\leq 2$, then
$$\cjg{\alpha_i\beta_j\gamma_k}=c_{ijk}+a_{ijk}\cjg{\Omega}.$$
Since $\Omega=\frac{-w+\sqrt{D_{\Omega}}}{2}$, we have that $\cjg{\Omega}=-\Omega-w$; therefore,
$$\cjg{\alpha_i\beta_j\gamma_k}=(c_{ijk}-a_{ijk}w)-a_{ijk}{\Omega},$$
and
$$\tau\left(\cjg{\alpha_i\beta_j\gamma_k}\right)=(-a_{ijk}).$$
It follows that the cube $\Phi'\left(B^{-1}\right)$ is equivalent to the cube $\Big((-1)^{i+j+k}a_{ijk}\Big)$.
\end{proof}

Together with the isomorphisms we have known before, we have proved the following corollary.

\begin{corollary}
The groups $\CSet$, $\TripQF$ and $\OrelCl{L/K}\times\OrelCl{L/K}$ are isomorphic.
\end{corollary}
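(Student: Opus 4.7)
The plan is to observe that every isomorphism we need has already been built earlier in the paper, and simply to assemble them into a commutative diagram of group isomorphisms. Indeed, the bijection $\Phi'\colon\Bal \to \CSet$ of Theorem~\ref{Theorem:BijectionCubes}, combined with the preceding corollary establishing the group structure on $\CSet$ through balanced triples, gives that $\CSet \cong \Bal$ as groups. Proposition~\ref{Prop:BalTripIso} yields $\Bal \cong \TripI$, and Proposition~\ref{Prop:TripIiso} then gives $\TripI \cong \OrelCl{L/K}\times\OrelCl{L/K}$. Composing these three isomorphisms, we obtain $\CSet \cong \OrelCl{L/K}\times\OrelCl{L/K}$.

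For the other half of the claim, the bijection $\Psi\colon\QFSet\to\OrelCl{L/K}$ of Theorem~\ref{Theorem:Bijection} is a group isomorphism, so taking products componentwise we get $\QFSet\times\QFSet \cong \OrelCl{L/K}\times\OrelCl{L/K}$. Proposition~\ref{Prop:TripQFiso} supplies $\TripQF \cong \QFSet\times\QFSet$. Chaining these, $\TripQF \cong \OrelCl{L/K}\times\OrelCl{L/K}$, which closes the triangle of isomorphisms. Schematically, this is precisely the commutative diagram already recorded in Figure~\ref{Fig:DetailedScheme}: all arrows on the perimeter have been independently shown to be group isomorphisms.

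There is essentially no obstacle, since the only thing to verify is that the chain of bijections respects the respective group operations; but each individual step has been proved to be a group isomorphism in the cited statements (ideal multiplication is preserved by $\varphi_1,\varphi_2$, the projection in Proposition~\ref{Prop:TripIiso} is visibly a homomorphism, $\Psi$ is a group isomorphism by Theorem~\ref{Theorem:Bijection}, and $\CSet\cong\Bal$ is a group isomorphism by construction of the group law on $\CSet$ in the preceding corollary). Hence the proof of the corollary amounts to one short paragraph concatenating these facts, with no further computation required.
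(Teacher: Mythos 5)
Your proposal is correct and takes essentially the same route as the paper, which states the corollary as an immediate consequence of concatenating the previously established isomorphisms (Theorem~\ref{Theorem:BijectionCubes}, Propositions~\ref{Prop:BalTripIso}, \ref{Prop:TripIiso} and \ref{Prop:TripQFiso}, and Theorem~\ref{Theorem:Bijection}), exactly as summarized in Figure~\ref{Fig:Isos}. Your added remark that the group law on $\CSet$ is by construction transported from $\Bal$, so that $\Phi'$ is automatically a group isomorphism, is the same implicit point the paper relies on.
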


We have actually proved even more; see Figure~\ref{Fig:Isos}.

\begin{figure}[ht]
\begin{tikzpicture}[scale=1.5]

\node (CD) at (0,0) {$\CSet$};
\node (TQF) at (3,0) {$\TripQF$};
\node (QFQF) at (6,0) {$\QFSet\times\QFSet$};
\node (ClCl) at (6,1) {$\OrelCl{L/K}\times\OrelCl{L/K}$};
\node (TI) at (3,1) {$\TripI$};
\node (Bal) at (0,1) {$\Bal$};

\draw  (CD) [->]-- node [above] {$\simeq$} node [below] {\tiny{Th.~\ref{Theorem:QFCompFromCube}}} (TQF);
\draw (TQF) [<->] -- node [above] {$\simeq$} node [below] {\tiny{Prop.~\ref{Prop:TripQFiso}}}  (QFQF);
\draw (QFQF)[<->] -- node [left] {$\simeq$}  (ClCl);
\draw (ClCl) [<->] -- node [above] {$\simeq$} node [below] {\tiny{Prop.~\ref{Prop:TripIiso}}} (TI);
\draw (TI) [<->] -- node [above] {$\simeq$} node [below] {\tiny{Prop.~\ref{Prop:BalTripIso}}} (Bal);
\draw (TI) [<->] -- node [left] {$\simeq$} (TQF);
\draw (Bal) [<->] -- node [left] {$\simeq$} node [right]{\tiny{Theorem~\ref{Theorem:BijectionCubes}}} (CD);
	
\end{tikzpicture}
\caption{Summary of the isomorphisms.}
\label{Fig:Isos}
\end{figure}

\appendix \section{} \label{Sec:App}
In Section~5 of \cite{KZforms}, the author explains what causes the problem if $K$ is a totally imaginary number field and provides a solution. Unfortunately, the solution is incorrect: There is a mistake in the proof of Proposition~5.5 of that paper, and here we will show that this mistake cannot be corrected.

Let us review the situation. Assume that $K$ is a totally imaginary number field (i.e., $K$ has no real embeddings) with $h^+(K)=1$, and let $L/K$ be a quadratic extension. The set $\QFSet$ is defined analogously as in Subsetion~\ref{Subsec:QFs} of this article. Note that for the case of totally imaginary $K$, we have $\USet{K}=\UPlusSet{K}$. In particular, $-1$ is a totally positive unit, and so $Q\sim -Q$.

Moreover, note that for any such $K$ and $L$, we have $\OrelCl{L/K}\cong\Cl{L}$. In \cite[Subsec.~5.3]{KZforms}, the subgroup $\Gr{L/K}$ of $\Cl{L}$ is defined by
\[\Gr{L/K}=\left\{I\left(\overline{I}\right)^{-1}\PSet{L}~\big|~I\in\ISet{L}\right\},\]
and then the factor group
\[\ClImag{L/K}=\quotient{\Cl{L}}{\Gr{L/K}}\]
(called \emph{imaginary class group}) is considered.

First problem unobserved in \cite{KZforms} is that the group $\ClImag{L/K}$ is often trivial.

\begin{lemma}\label{Lemma:OddImagClassGroup}
Let $K$ and $L$ be as above, and suppose that $h(L)$ is odd. Then $\big|\ClImag{L/K}\big|=1$.
\end{lemma}

\begin{proof}
Consider the map
\[ \begin{array}{cccc}
f: & \Cl{L} & \longrightarrow & \Cl{L}\\
& J\PSet{L} & \longmapsto & J^2\PSet{L}.
\end{array}\]
Since the order of the group $\Cl{L}$ is odd by the assumption, it follows that $f(J\PSet{L})\in\PSet{L}$ if and only if $J\in\PSet{L}$. Thus, $f$ is injective, and hence a bijection. Therefore, for any $I\in\ISet{L}$, we can find $I_0\in\ISet{L}$ such that $I\PSet{L}=I_0^2\PSet{L}$. Moreover, recall that by Lemma~\ref{Lemma:InverseIdeals}, we have $(\overline{I_0})^{-1}\PSet{L}=I_0\PSet{L}$. It follows that $I\PSet{L}=I_0(\overline{I_0})^{-1}\PSet{L}$, and hence $I\PSet{L}\in\Gr{L/K}$. The claim follows.
\end{proof}

To understand the problem, we provide two quadratic field extensions of $\Q(\ii)$, for which we compute the set $\QFSet$. But first, we need to prepare some lemmas.

\begin{lemma}\label{Lemma:PhiSurjective}
The map
\[
\begin{array}{cccc}
\Phi:&\Cl{L}
&\longrightarrow
& \QFSet \vspace{3mm}\\
&\left[\alpha, \beta\right] & \longmapsto &
\frac{\norm{L/K}{\alpha x+\beta y}}{\frac{\cjg{\alpha}\beta-\alpha\cjg{\beta}}{\Omega-\overline{\Omega}}}=\frac{\alpha\cjg{\alpha}x^2-(\cjg{\alpha}\beta+\alpha\cjg{\beta})xy+\beta\cjg{\beta}y^2}{\frac{\cjg{\alpha}\beta-\alpha\cjg{\beta}}{\Omega-\overline{\Omega}}}   
\end{array}
\] 
is well-defined and surjective.
\end{lemma}

\begin{proof}
The verification that $\Phi$ is well-defined has actually been done in the proof of \cite[Prop.~5.5]{KZforms}. For the surjectivity, it is easy to check that a quadratic form $ax^2+bxy+cy^2$ (with $a,b,c\in\OK{K}$ and $b^2-4ac\in\DSet$) is the image of the ideal $\left[a,\frac{-b+\sqrt{b^2-4ac}}{2}\right]$.
\end{proof}

Before we state the next lemma, note that equivalent quadratic forms over $\OK{K}$ represent the same elements of $\OK{K}$ up to a multiple by some $u\in\USet{K}$.

\begin{lemma} \label{Lemma:RepreJednotky}
Let $K$, $L$, $\Phi$ be as above and let $I\in\ISet{L}$ be integral. Then the quadratic form $Q=\Phi(I)$ represents some $u\in\USet{K}$ if and only if $I\in\PSet{L}$.
\end{lemma}

\begin{proof}
If $I\in\PSet{L}$, then $I\sim[1,\Omega]$, and hence 
\[Q=\Phi(I)\sim\Phi([1,\Omega])=x^2-(\Omega+\cjg{\Omega})xy+\Omega\cjg{\Omega}y^2=Q_{\id}(x,y)\] 
(see Theorem~\ref{Theorem:Bijection}). Since $Q_{\id}$ represents $1$, it follows that $Q$ represents some $u\in\USet{K}$.

To prove the other direction, assume that $I=[\alpha,\beta]$ for some $\alpha, \beta\in\OK{L}$, and that $Q=\Phi(I)$ represents $u\in\USet{K}$. Moreover, denote $m=\frac{\cjg{\alpha}\beta-\alpha\cjg{\beta}}{\Omega-\overline{\Omega}}$; then 
\[Q(x,y)=\frac{\norm{L/K}{\alpha x +\beta y}}{m}.\]
By the assumption, there exist $a,b\in\OK{K}$ such that $Q(a,b)=u$. It follows that $\norm{L/K}{\alpha a +\beta b}=um$. Denote $\gamma=\alpha a +\beta b$. Then $(\gamma)\subseteq I$, and so there exists another integral $\OK{L}$-ideal $J$ such that $IJ=(\gamma)$. Recall that $\norm{L/K}{I}=(m)$ and $\norm{L/K}{(\gamma)}=(\norm{L/K}{\gamma})$ (see Subsection~\ref{Subsec:IdealsAndOrientation}). It follows that
\[(m)\cdot\norm{L/K}{J}=\norm{L/K}{I}\cdot\norm{L/K}{J}=\norm{L/K}{IJ}=\norm{L/K}{(\gamma)}=(um)=(m).\]
Therefore, $\norm{L/K}{J}=\OK{K}$. Furthermore, recall that $\norm{L/K}{J}=(\norm{L/K}{\omega}~|~\omega\in J)$ by the definition. Thus, there exists $\mu\in J$ such that $\norm{L/K}{\mu}=1$. Since the ideal $J$ is integral, we must have $\mu\in\USet{L}$. It follows that $J=\OK{L}$, and so the equality $IJ=(\gamma)$ translates as $I=(\gamma)$.
\end{proof}

\begin{example} \label{Ex:3elClL}
Consider $K=\Q(\ii)$ and $L=\Q(\ii,\sqrt{23})$. Then we have $\OK{L}=[1,\Omega]$ with $\Omega=\frac{\ii+\sqrt{23}}{2}$, and the group $\Cl{L}\cong\Z/3\Z$ is generated by (the class of) the ideal $I=[1-\ii,\Omega]$. Note that since we are in a three-element group, we have $[I]^2=[I]^{-1}$. As $[I]^{-1}=[\cjg{I}]$, we get $[I]^2=[\cjg{I}]$. This means that we have 
\[\Cl{L}=\left\{[\OK{L}], [I], [\cjg{I}]\right\}.\] 
Applying the map $\Phi$ on the elements of $\Cl{L}$, we obtain the following quadratic forms:
\[\begin{array}{ccl}
\OK{L} & \longmapsto & x^2-\ii xy-6y^2 =: Q_0(x,y),\\
I & \longmapsto & (1-\ii)x^2-\ii xy-3(1+\ii)y^2=:Q_1(x,y),\\
I^2\sim\overline{I} & \longmapsto & -(1-\ii)x^2+\ii xy+3(1+\ii)y^2=:Q_2(x,y).
\end{array}\]
We want to establish the size of $\QFSet$. Since the map $\Phi$ is surjective by Lemma~\ref{Lemma:PhiSurjective}, we know that $\QFSet$ contains only classes represented by the forms $Q_0$, $Q_1$, $Q_2$. It is clear that $Q_2=-Q_1$, and hence $Q_1\sim Q_2$. Thus, the question is whether $Q_1$ is equivalent to $Q_0$. Since $Q_1=\Phi(I)$ and $[I]\neq[\OK{L}]$ (otherwise, $[I]$ could not generate $\Cl{L}$), Lemma~\ref{Lemma:RepreJednotky} gives a negative answer to this question. It follows that $\QFSet=\{[Q_0],[Q_1]\}$ with $Q_0\not\sim Q_1$, and hence $\abs{\QFSet}=2$.

On the other hand, we have $\big|\ClImag{L/K}\big|=1$ by Lemma~\ref{Lemma:OddImagClassGroup}, so there does not exist any well-defined surjective map from $\ClImag{L/K}$ to $\QFSet$. Neither can we find a bijection between $\QFSet$ and $\Cl{L}$ or any of its factor group or subgroup, since all of them must be of order $1$ or $3$.
\end{example}

\begin{example} \label{Ex:4elClL}
Let $K=\Q(\ii)$ and $L=\Q(\ii,\sqrt{14})$. Then $\OK{L}=[1,\Omega]$ for $\Omega=\frac{\sqrt{14}+\ii\sqrt{14}}{2}$. We have $\Cl{L}\cong\Z/4\Z$, and this group is generated by (the class of) the ideal $I$, where
\[ I=[2+\ii,\Omega+1], \quad I^2=[3+4\ii,\Omega+1], \quad I^3\sim\cjg{I}=[2+\ii,\cjg{\Omega}+1].\]
If we apply the map $\Phi$ on the representatives of the elements of $\Cl{L}$, we get the following quadratic forms:
\[\begin{array}{ccl}
\OK{L} & \longmapsto & x^2-7\ii y^2 =: Q_0(x,y),\\
I & \longmapsto & (2+\ii)x^2-2 xy-(1+3\ii)y^2=:Q_1(x,y),\\
I^2 & \longmapsto & (3+4\ii)x^2-2xy-(1+\ii)y^2=:Q_2(x,y),\\
I^3 & \longmapsto & -(2+\ii)x^2+2 xy+(1+3\ii)y^2=:Q_3(x,y).
\end{array}\]
Again, we want to find out whether these four forms give pairwise different equivalence classes. First of all, we see that $Q_3=-Q_1$, and hence $Q_3\sim Q_1$. Moreover, a simple computation in Magma tells us that $Q_0$ does not lie in the same genus as $Q_1$, and $Q_1$ is not in the same genus as $Q_2$. It follows that $Q_0\not\sim Q_1$ and $Q_1\not\sim Q_2$. Thus, it only remains to decide whether $Q_0$ is equivalent to $Q_2$. Here we apply Lemma~\ref{Lemma:RepreJednotky}: Since $I^2\notin\PSet{L}$, the quadratic form $Q_2$ does not represent any unit, and hence it cannot be equivalent to $Q_0$. It follows that $\QFSet=\{[Q_0], [Q_1], [Q_2]\}$ and $\abs{\QFSet}=3$. Since any factor group or subgroup of $\Cl{L}$ is of order $1$, $2$ or $4$, we cannot map it bijectively on $\QFSet$.
\end{example}

The main problem of \cite{KZforms} lies in the proof of Proposition~5.5.; in particular, in the part where the author shows that the map
\[
\begin{array}{cccc}
\Phi':&\ClImag{L/K}
&\longrightarrow
& \QFSet \vspace{3mm}\\
&\left[\alpha, \beta\right] & \longmapsto &
\frac{\alpha\cjg{\alpha}x^2-(\cjg{\alpha}\beta+\alpha\cjg{\beta})xy+\beta\cjg{\beta}y^2}{\frac{\cjg{\alpha}\beta-\alpha\cjg{\beta}}{\Omega-\overline{\Omega}}}   
\end{array}
\] 
is well-defined with respect to taking the quotient of $\Cl{L}$ by $\Gr{L/K}$. The author only checks that $\Phi'(I)=\Phi'(\overline{I})$ for any $I\in\ISet{L}$; that is true, but not sufficient. It would have to be proved that $\Phi'(I)=\Phi'(IJ(\overline{J})^{-1})$ for any $I,J\in\ISet{L}$. But as we have seen in Examples~\ref{Ex:3elClL} and~\ref{Ex:4elClL}, there does not exist any factorgroup or subgroup of $\Cl{L}$ which would be of the same size as $\QFSet$. This means in particular that $\Phi'$ is not well-defined.

\section*{Acknowledgment}
 I wish to express my thanks to V\'{\i}t\v{e}zslav Kala for his suggestions and to Jakub Kr\'{a}sensk\'{y} for his help with computations in Appendix~\ref{Sec:App}. \\


\end{document}